\numberwithin{equation}{section}  
\newcommand{\beq}{\begin{equation}} 
\newcommand{\eeq}{\end{equation}} 
\newcommand{\bea}{\begin{aligned}}
\newcommand{\eea}{\end{aligned}}
\newcommand{\bdm}{\begin{displaymath}}
\newcommand{\edm}{\end{displaymath}}
\newcommand{\barr}{\begin{array}}
\newcommand{\earr}{\end{array}}
\newcommand{\ben}{\begin{enumerate}}
\newcommand{\een}{\end{enumerate}}
\newcommand{\bde}{\begin{description}}
\newcommand{\ede}{\end{description}}
\newtheorem{thm}{Theorem}
\newtheorem{prop}[thm]{Proposition}  
\newtheorem{lem}[thm]{Lemma}
\newtheorem{rem}[thm]{Remark}
\newcommand{\R}{\mathbb{R}}
\newcommand{\N}{\mathbb{N}}
\newcommand{\PP}{\mathbb{P}}
\newcommand{\E}{{\mathbb{E}}}
\newcommand{\defi}{\equiv} 
\newcommand{\dd}{\text{d}}
\newcommand{\ee}{\text{e}}
\newcommand{\w}{\omega}
\newcommand{\vare}{\varepsilon}
\newcommand{\F}{\mathcal{F}}
\newcommand{\1}{\mathbbm{1}}
\newcommand{\FF}{{\mathcal F}}
\begin{document}
\title[Ergodic theorem for branching Brownian motion]
{An ergodic theorem for the extremal process \\ of branching Brownian motion}

\author[L.-P. Arguin]{Louis-Pierre  Arguin}            
 \address{L.-P. Arguin\\ Universit\'{e} de Montr\'{e}al\\ 2920 chemin de la Tour\\
Montr\'{e}al, QC H3T 1J4 \\ 
Canada}
\email{arguinlp@dms.umontreal.ca}
\author[A. Bovier]{Anton Bovier}
\address{A. Bovier\\Institut f\"ur Angewandte Mathematik\\Rheinische
   Friedrich-Wilhelms-Uni\-ver\-si\-t\"at Bonn\\Endenicher Allee 60\\ 53115
   Bonn,Germany}
\email{bovier@uni-bonn.de}

\author[N. Kistler]{Nicola Kistler}
\address{N. Kistler\\Institut f\"ur Angewandte Mathematik\\Rheinische
   Friedrich-Wilhelms-Uni\-ver\-si\-t\"at Bonn\\Endenicher Allee 60\\ 53115
   Bonn,
Germany}
\email{nkistler@uni-bonn.de}

\subjclass[2000]{60J80, 60G70, 82B44} \keywords{Branching Brownian motion, ergodicity,
extreme value theory, KPP equation and traveling waves}

\thanks{A. Bovier is partially supported through the German Research Foundation in the SFB 611 and
the Hausdorff Center for Mathematics. N. Kistler is partially supported by the Hausdorff Center for Mathematics.
L.-P. Arguin is supported by a Discovery Grant of the Natural Sciences and Engineering Research Council of Canada and the {\it \'Etablissement de Nouveaux Chercheurs} program  from the Fonds de recherche du Qu\'ebec - Nature et technologies.}

 \date{\today}

\begin{abstract} 
In a previous paper, the authors proved a conjecture of Lalley and Sellke that the 
empirical (time-averaged) distribution function of the maximum of branching Brownian motion converges almost surely to a Gumbel distribution.
The result is extended here to the entire system of particles that are extremal, i.e.~ close to the maximum. 
Namely, it is proved that the distribution of extremal particles under time-average converges to a Poisson cluster process.
\end{abstract}

\maketitle

\section{Introduction and Main Result}
Let $x(t)=(x_v(t), v\in \Sigma(t))$ be a standard branching Brownian motion (BBM) on $\R$ defined on a filtered space 
$(\Omega,\FF,\PP,\{\FF_t\}_{t\in\R_+})$.
The set $\Sigma(t)$ indexes the particles at time $t$ and  $x_v(t)$ is the position of the {\it particle} $v$ at time $t$.
We recall the construction of the process: at time $0$ we start with a single standard Brownian motion $x_1(t)$ that splits after
an exponential random time $T$ of mean $1$ into $k$ particles
with probability $p_k$, where $\sum_{k=1}^\infty p_k = 1$, $\sum_{k=1}^\infty k p_k = 2$, and $\sum_{k} k(k-1) p_k < \infty$. 
The positions of the $k$ particles are independent Brownian motions starting at $x_1(T)$.
The $k$ particles branch independently and with the same law as the first Brownian particle.
At time $t>0$, there will be a random number $n(t)\equiv |\Sigma(t)|$ of particles  located at $x(t)=(x_v(t), v\in\Sigma(t))$. 
Note that $\E[n(t)]=e^t$.

A fundamental link between the maximum of BBM and partial differential equations 
was observed by McKean \cite{mckean}. 
If $\phi: \R\to\R$ is such that $0\leq \phi \leq 1$, then the function
\beq 
\label{eqn: bbm_repr}
u(t, x) \equiv 1- \E\left[\prod_{v\in\Sigma(t)} \phi(x+x_v(t)) \right]
\eeq
solves the Kolmogorov-Petrovsky-Piscounov equation [KPP],  also referred to as the Fisher-KPP equation,
\beq  \label{eqn: kpp}
 u_t = \frac{1}{2} u_{xx} + (1-u) - \sum_{k=1}^\infty p_k (1-u)^k, 
\eeq 
with initial condition $u(0,x)=1-\phi(x)$. For the case $\phi(x)=\1_{[0,\infty)}(x)$,
$
1-u(t,x)=\PP\left(\max_{v\in\Sigma(t)} x_v(t) \leq x \right)
$
 is the distribution function of the maximum of BBM. 

Results of Kolmogorov, Petrovsky, and Piscounov \cite{kpp} and of Bramson \cite{bramson} established
the convergence of the distribution under appropriate recentering. 
Namely, for the initial condition  $\phi(x)=\1_{[0,\infty)}(x)$,
\beq \label{travelling_one}
u\big(t, m(t)+ x \big) = 1-w(x) \qquad \text{uniformly in}\;  x\; \text{as} \; t\to \infty,
\eeq
with the recentering term
\beq 
\label{eqn: recentering}
m(t) = \sqrt{2} t - \frac{3}{2\sqrt{2}} \log t, 
\eeq
and $\w(x)$ is the unique solution (up to translation) of a certain ode. 
Convergence for other initial conditions was also proved by Bramson in \cite{bramson_monograph}; see Theorem \ref{bramson_fundamental_convergence} in the Appendix for a precise statement.
A probabilistic interpretation of  $w(x)$ of BBM was given  by Lalley and Sellke.
Define the martingales
\beq \label{defi_martingale}
Y(t) \equiv \sum_{v\in\Sigma(t)} e^{-\sqrt{2} (\sqrt{2}t -x_v(t))} \qquad 
Z(t) \equiv \sum_{v\in\Sigma(t)} \big(\sqrt{2}t -x_v(t)\big) e^{-\sqrt{2} (\sqrt{2}t -x_v(t))}.
\eeq
Then $Y(t)$ converges to zero almost surely while $Z(t)$,
known as  the \emph{derivative martingale}, converges almost random variable $Z>0$.
Moreover, 
\beq
\label{eqn: rep}
w(x)=\E\left[ \exp \left(-C_{\max} Z~e^{-\sqrt{2}x}\right)\right],
\eeq
for an explicitly known  constant $C_{\max}>0$.

In this paper we study properties of the so-called \emph{extremal process} of BBM, i.e.~the point process 
\beq
\mathcal E_{t,\omega}=\sum_{v\in\Sigma(t)}\delta_{x_v(t)-m(t)}.
\eeq
Our objective is to prove weak convergence of  the random point measure $\mathcal E_{t,\omega}$ with respect to {\it time-averaging} for a fixed realization $\omega$.

\begin{rem} The natural topology for point measures is that of \emph{vague convergence}. Weak convergence of random elements of this space is implied by the convergence of Laplace functionals,
\begin{equation}
\E\left[ \exp \left( -\int f(y) ~\mu_n(\dd y) \right) \right]\
\end{equation}
for every $f\in \mathcal C^+_c(\R)$ i.e. the set of non-negative continuous functions with compact support,
see e.g. \cite{kallenberg}.
\end{rem}

To this aim, for all $f\in\mathcal C_c(\R)^+$, we analyze the convergence of the Laplace functional
\begin{equation}
\left\langle~ \exp \left( -\int f(y) ~\mathcal E_{t,\omega}(\dd y) \right) ~\right\rangle_T  \equiv \frac{1}{T}\int_0^T   \exp \left( -\int f(y) ~\mathcal E_{t,\omega}(\dd y) \right)  \dd t \ .
\end{equation}
The limit point process, denoted $\mathcal E_{Z(\omega)}$, is described precisely in \eqref{eqn: E_Z}  after the statement of the result.  
It is a Poisson cluster process whose law depends on the realization $\omega$ through the value of the derivative martingale. 
We write $E$ for the expectation of this point process for a fixed $Z(\omega)$. 
The main result proves in effect an ergodic theorem for the system of extremal particles of BBM. 
However, the system has more than one ergodic components since 
 the limit distribution of the particles do depend on the realization $\omega$ through $Z(\omega)$. 

\begin{thm}[Ergodic theorem for the extremal process]
\label{thm: ergodic}
There exists a set $\Omega_0\subset \Omega$ of $\PP$-probability one on which $\mathcal E_{t,\omega}$ converges weakly under time-average to a Poisson cluster process $\mathcal E_{Z(\omega)}$, where 
$Z(\omega)$ is the limit of the derivative martingale.
That is, for $\omega\in\Omega_0$,
\beq
\label{eqn: ergodic}
\lim_{T\to\infty}\left\langle~ \exp \left( -\int f(y) ~\mathcal E_{t,\omega}(\dd y) \right) ~\right\rangle_T = E\left[  \exp \left(-\int f(y) ~\mathcal E_{Z(\omega)}(\dd y) \right) \right]\ \text{$\forall f\in \mathcal C^+_c(\R)$ .}
\eeq
\end{thm}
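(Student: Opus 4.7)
The plan is, for each fixed $f\in\mathcal C^+_c(\R)$, to set $F_t(\omega):=\exp(-\int f\,d\mathcal E_{t,\omega})$ and show that the Ces\`{a}ro average $\langle F_t\rangle_T$ converges almost surely as $T\to\infty$ to $G_f(\omega):=E[\exp(-\int f\,d\mathcal E_{Z(\omega)})]$. Since $\mathcal E_{Z(\omega)}$ is a Poisson cluster process whose intensity is linear in $Z(\omega)$, the target has the closed form $G_f(\omega)=\exp(-C_fZ(\omega))$ for a constant $C_f>0$ depending only on $f$. The argument has three stages: (a) identify $G_f$ and its conditional approximations through the McKean representation and Bramson's theorem; (b) prove $L^2$-convergence $\E[(\langle F_t\rangle_T-G_f)^2]\to 0$ via a second-moment estimate built on the branching structure; (c) upgrade to almost-sure convergence and extract a single null set $\Omega_0$ valid for all $f$.

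For stage (a), put $\phi_f:=e^{-f}$ so that \eqref{eqn: bbm_repr} yields $\E[F_t]=1-u_f(t,-m(t))$ for the KPP solution $u_f$ with datum $1-\phi_f$, and Bramson's theorem gives $\lim_t\E[F_t]=\E[w_f(0)]$, where $w_f$ is the associated traveling wave admitting an analogue of the Lalley--Sellke representation \eqref{eqn: rep} of the form $w_f(y)=\E[\exp(-C_fZe^{-\sqrt 2 y})]$. Conditioning at an intermediate time $s$ via the branching property,
\begin{equation}
\E[F_t\mid\FF_s]=\prod_{u\in\Sigma(s)}\bigl[1-u_f\bigl(t-s,\,x_u(s)-m(t)\bigr)\bigr],
\end{equation}
and since $x_u(s)-m(t)=-\sqrt 2(t-s)+(x_u(s)-\sqrt 2 s)+o(1)$, a second application of Bramson turns each factor into $w_f(\sqrt 2 s-x_u(s))$ as $t\to\infty$; a derivative-martingale computation in the spirit of \eqref{eqn: rep}, using that $Y(s)\to 0$ and $Z(s)\to Z(\omega)$ almost surely, then collapses the resulting product to $\exp(-C_fZ(s))$, which converges to $G_f(\omega)$ as $s\to\infty$.

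For stages (b) and (c), choose $s=s(T)\to\infty$ sufficiently slowly and decompose
\begin{equation}
\langle F_t\rangle_T-G_f=\frac{1}{T}\int_0^T\bigl(F_t-\E[F_t\mid\FF_s]\bigr)\,dt+\frac{1}{T}\int_0^T\bigl(\E[F_t\mid\FF_s]-G_f\bigr)\,dt.
\end{equation}
The bias term tends to zero by stage (a). For the fluctuation term $\Delta_T$, expand $\E[\Delta_T^2]$ as a double integral and exploit that, conditional on $\FF_s$, the centred factors $F_t-\E[F_t\mid\FF_s]$ decompose across the independent subtrees rooted at $\Sigma(s)$; combined with uniform Bramson-type estimates, this should yield $\E[\Delta_T^2]=o(1)$. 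Along the dyadic subsequence $T_n=2^n$ the variances are then summable, so Borel--Cantelli yields almost-sure convergence along $(T_n)$; monotonicity of $T\mapsto\int_0^T F_t\,dt$ (using $0\le F_t\le 1$) interpolates to all $T$, and a single null set $\Omega_0$ is obtained by fixing a countable dense subfamily of $\mathcal C^+_c(\R)$ and invoking continuity of the Laplace functional in $f$. The main obstacle is the correlation bound itself: $F_t$ and $F_u$ are far from independent, since both register the common ancestry captured by $Z$, and only the residuals $F_t-\E[F_t\mid\FF_s]$ decorrelate---quantifying this requires uniform control on the convergence rate in Bramson's theorem as $t\to\infty$ with spatial argument ranging over the random set $\{x_u(s)-m(t)\}_{u\in\Sigma(s)}$, which is precisely the type of refined analysis developed by the authors for the Lalley--Sellke conjecture and which must now be pushed through for the full Laplace functional rather than a single indicator.
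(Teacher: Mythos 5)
Your overall architecture coincides with the paper's: a reduction to a countable family of test functions, a decomposition of $\langle F_t\rangle_T$ into a bias term $\frac1T\int \E[F_t\mid\FF_{s}]\,dt$ (handled by a Lalley--Sellke/Bramson argument, which is essentially the paper's Proposition \ref{prop: LS}) and a fluctuation term handled by a second-moment bound plus a strong law. Stage (a) and stage (c) are sound in outline (the paper uses Lyons' criterion rather than Borel--Cantelli along dyadics, but with the quantitative correlation decay either device works, and your interpolation via $0\le F_t\le 1$ is standard).

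The genuine gap is in stage (b), precisely at the point you flag as ``the main obstacle'' but do not resolve. Conditional independence of the subtrees rooted at $\Sigma(s)$ is \emph{not} enough to decorrelate $Y_t=F_t-\E[F_t\mid\FF_s]$ and $Y_{t'}$: writing $F_t=\prod_i\Phi_i(t)$ over the ancestors $i$ at time $s$, one has $\E[F_tF_{t'}\mid\FF_s]=\prod_i\E[\Phi_i(t)\Phi_i(t')\mid\FF_s]$, and the obstruction is that \emph{within a single subtree} the factors $\Phi_i(t)$ and $\Phi_i(t')$ are strongly correlated, so this does not factor into $\prod_i\E[\Phi_i(t)\mid\FF_s]\,\E[\Phi_i(t')\mid\FF_s]$. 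The missing ingredient is a genealogical statement (Theorem \ref{thm: overlaps} of the paper, a two-time extension of the results of \cite{abk_genealogies}): for $|t-t'|\ge R_T$, with probability at least $1-Ce^{-R_T^{\kappa}}$ no pair of extremal particles, one at time $t$ and one at time $t'$, has branching time in $[R_T,t]$; equivalently, no single ancestor at time $R_T$ has extremal descendants at both times. On that event $\Phi_i(t)$ and $\Phi_i(t')$ cannot both be nontrivial, so $\Phi_i(t)\Phi_i(t')=1-(1-\Phi_i(t))-(1-\Phi_i(t'))$, whose conditional expectation $1-u_i(t)-u_i(t')$ differs from $(1-u_i(t))(1-u_i(t'))$ only by the cross terms $u_i(t)u_i(t')$, which are controlled by the uniform tail bounds of Lemma \ref{lem: fundamental_tail} together with the path localization of Proposition \ref{prop: loc}. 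Without this branching-time input, ``uniform control on the rate in Bramson's theorem'' will not produce the required decorrelation, because the correlation between $F_t$ and $F_{t'}$ coming from a common extremal ancestor after time $R_T$ is not small pointwise --- it is only small because the \emph{event} that such an ancestor exists is rare. A secondary omission: to apply Bramson's convergence and the tail estimates one must truncate the test function from above (replace $f$ by $f_\delta$ with $e^{-f_\delta}=e^{-f}\1_{(-\infty,\delta]}$), which is legitimate because the recentered maximum is stochastically bounded; your proposal works with $f$ directly and would need this adjustment.
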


The main result has to be compared with the convergence in {\it space-average} of the extremal process $\mathcal E_{t,\omega}$.
From this perspective, one considers the law of  $\mathcal E_{t,\omega}$ under $\PP$ when averaging over the realizations $\omega$
instead of under $\langle ~ \cdot ~ \rangle_T$ for $\omega$ fixed. 
The weak convergence of the extremal process under space-average has been studied
by several authors: Brunet and Derrida \cite{derrida_brunet, brunet_derrida_two},  A\"\i dekon, Berestycki, Brunet, and Shi \cite{aidekon_et_al},
and the present authors  in \cite{abk_poissonian, abk_extremal}. 
A description of the extremal process in the limit has been proved independently in \cite{abk_extremal} and \cite{aidekon_et_al}.
In \cite{abk_extremal}, the existence of the following process needed for the description of the limit is proved.
\begin{thm}[\cite{abk_extremal}]
\label{thm: cluster}
Under the law $\PP$ conditioned on the event $\{ \max_{v\in\Sigma(t)} x_v(t)-\sqrt{2}t  >0\}$,
the point process
\beq
 \overline{\mathcal{E}}_t\defi \sum_{v\in\Sigma(t)} \delta_{x_v(t)-\sqrt{2}t}
\eeq
 converges weakly as $t\to\infty$ to a well-defined point process $ \overline{\mathcal{E}}$.
\end{thm}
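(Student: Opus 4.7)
The plan is to compute the conditional Laplace functional
\[
\mathcal L_t(f)\defi\E\bigl[\exp\bigl(-\textstyle\int f(y)\,\overline{\mathcal E}_t(\dd y)\bigr)\,\big|\,A_t\bigr],\qquad A_t\defi\bigl\{\max_{v}x_v(t)>\sqrt 2\,t\bigr\},
\]
for each $f\in\mathcal C_c^+(\R)$, to express it through the McKean representation \eqref{eqn: bbm_repr} as a ratio of KPP solutions evaluated at a common spatial point, and to identify the limit as the Laplace functional of a random point measure on $\R$. Fix $f$ and put $\phi(y)=e^{-f(y)}$, $\psi(y)=e^{-f(y)}\1_{\{y\le 0\}}$, $\phi_0(y)=\1_{\{y\le 0\}}$, all taking values in $[0,1]$. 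Since $\1_{A_t}=1-\prod_v\phi_0(x_v(t)-\sqrt 2\,t)$, the identity \eqref{eqn: bbm_repr} with spatial parameter $x=-\sqrt 2\,t$ gives
\[
\E\bigl[\1_{A_t}\exp\bigl(-\textstyle\int f(y)\,\overline{\mathcal E}_t(\dd y)\bigr)\bigr]=u_\psi(t,-\sqrt 2\,t)-u_\phi(t,-\sqrt 2\,t),\qquad \PP(A_t)=u_{\phi_0}(t,-\sqrt 2\,t),
\]
where $u_\chi$ solves \eqref{eqn: kpp} with initial datum $1-\chi$. Thus $\mathcal L_t(f)$ is the ratio $(u_\psi-u_\phi)/u_{\phi_0}$ at $(t,-\sqrt 2\,t)$.

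The core of the argument is the asymptotic analysis of this ratio. By the $x_v\leftrightarrow-x_v$ symmetry of BBM, $u_\chi(t,-\sqrt 2\,t)=\tilde u_\chi(t,\sqrt 2\,t)$, where $\tilde u_\chi$ is the KPP solution for the reflected initial datum; in each of the three cases this datum is step-like with compactly supported transition and hence falls within the Bramson class. Theorem \ref{bramson_fundamental_convergence} therefore yields $\tilde u_\chi(t,m(t)+y)\to 1-w_\chi(y)$ uniformly in $y$, with $w_\chi$ of generalised Lalley--Sellke form $w_\chi(y)=\E[\exp(-C_\chi Z e^{-\sqrt 2\,y})]$ in analogy with \eqref{eqn: rep}, so that $1-w_\chi(y)\sim K_\chi R(y)$ as $y\to\infty$ for a common rate $R(y)$ of order $e^{-\sqrt 2\,y}$ (times at most a polynomial correction arising from the tail of $Z$). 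Since $\sqrt 2\,t=m(t)+a_t$ with $a_t\defi\tfrac{3}{2\sqrt 2}\log t\to\infty$, the quantities of interest are $\tilde u_\chi(t,m(t)+a_t)$, whose heuristic size $K_\chi R(a_t)$ is of the \emph{same} order as Bramson's uniform $o(1)$ error. The decisive step is therefore the multiplicative sharpening
\[
\lim_{t\to\infty}\frac{\tilde u_\chi(t,m(t)+a_t)}{R(a_t)}=K_\chi\in(0,\infty),
\]
which would be established by linearising KPP at the front---where for $u\ll 1$ one has $u_t\approx\tfrac12 u_{xx}+u$---together with maximum-principle upper and lower barriers built from the derivative martingale $Z(t)$, in the spirit of Chapter~3 of \cite{bramson_monograph} and the arguments underlying \eqref{eqn: rep}.

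Granted these sharp asymptotics, the common rate $R(a_t)$ cancels in the ratio and $\mathcal L_t(f)\to(K_\psi-K_\phi)/K_{\phi_0}=:\Lambda(f)$, automatically independent of $Z$, as one expects under conditioning on a rare event. Kallenberg's theorem then identifies $\Lambda$ as the Laplace functional of a uniquely determined random point measure on $\R$, provided the standard compatibility conditions (positivity, $\Lambda(0)=1$, continuity under monotone approximation) hold; these must be checked by direct analysis of the $f$-dependence of the constants $K_\chi$. This limit measure is then declared to be $\overline{\mathcal E}$, and the argument delivers weak convergence. The principal obstacle is the multiplicative sharpening above, which is strictly finer than Bramson's uniform convergence; a secondary difficulty is the continuity of $f\mapsto K_\chi(f)$ needed to verify the Kallenberg hypotheses.
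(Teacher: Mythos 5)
First, note that Theorem \ref{thm: cluster} is not proved in this paper at all; it is quoted from \cite{abk_extremal}, and your architecture (conditional Laplace functional written via McKean's representation as a ratio of KPP solutions evaluated in the tail $x=\sqrt2\,t=m(t)+\tfrac{3}{2\sqrt2}\log t$, cancellation of the common decay rate, identification of the limit via Kallenberg) is essentially the route taken there. The decisive step you correctly isolate --- the multiplicative refinement $\tilde u_\chi(t,m(t)+a_t)=\big(\gamma(r)^{\pm1}+o(1)\big)\,C_r(\chi)\,a_t e^{-\sqrt2 a_t}$ with $C_r(\chi)\to C(\chi)$ as $r\to\infty$ --- is not something you need to re-derive from scratch by linearisation and barriers: it is exactly Bramson's Proposition 8.3, adapted in Proposition 3.3 of \cite{abk_extremal} and restated in this paper as Lemma \ref{lem: fundamental_tail}, which applies with $X(t)=\tfrac{3}{2\sqrt2}\log t$ (this satisfies $X(t)\to\infty$, $X(t)=o(\sqrt t)$). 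With that sandwich in hand the ratio converges to $(C(\psi)-C(\phi))/C(\phi_0)$ after letting $t\to\infty$ and then $r\to\infty$. As written, however, your proof defers its only nontrivial ingredient, so it is incomplete rather than wrong.

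Two further points need repair. (i) Your appeal to Theorem \ref{bramson_fundamental_convergence} for the reflected datum coming from $\phi=e^{-f}$ is not legitimate: there $1-\tilde\phi$ is compactly supported, so condition 2 of that theorem fails and uniform convergence to the travelling wave does not hold; moreover, even for the data where Theorem \ref{bramson_fundamental_convergence} does apply, the additive uniform error $o(1)$ is of the same order as the quantity $1-w_\chi(a_t)$ you are trying to extract, so the soft convergence theorem can never justify the asymptotic equivalence --- one must work with the quantitative tail estimate (Lemma \ref{lem: fundamental_tail} and its integrability input via the linearised equation, as in \cite{chauvin_rouault}) for all three initial data. (ii) Kallenberg's criterion requires more than $\Lambda(0)=1$ and formal positivity: you must show that $\Lambda$ is the Laplace functional of a \emph{locally finite} point process, i.e.\ that under the conditioning the number of particles in $[-A,\infty)$ shifted by $\sqrt2\,t$ is tight for each $A$, and that $\Lambda(f_n)\to1$ along $f_n\downarrow 0$. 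This tightness is a genuine estimate (a conditional first-moment bound on $\#\{v:x_v(t)-\sqrt2 t\ge -A\}$), not a routine verification, and it is absent from your sketch.
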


If we write  $\overline{\mathcal{E}}= \sum_{k\in\N}\delta_{y_k}$, then the process of the gaps given by
\begin{equation}
\label{eqn: D}
\mathcal D\equiv \sum_{k \in \N} \delta_{y_k - \max_k y_k},
\end{equation}
also exists. This process is referred to as {\it the cluster process}.

Now, let $\mathcal E_Z$ be the Poisson cluster process constructed as follows: 
for $Z>0$ fixed, let $(p_i,i\in\N)$ be a Poisson random measure with intensity $C_{\max} Z \sqrt{2} e^{-\sqrt{2} x} \dd x $ where $C_{\max}$ is the constant appearing in \eqref{eqn: rep}. 
Let $\mathcal{D}^{(i)}=(\Delta_{j}^{(i)}, j\in\N)$ be i.i.d. copies of the cluster point process $\mathcal{D}$ defined in \eqref{eqn: D}. 
Then, for a given $Z>0$, take, 
\begin{equation}
\label{eqn: E_Z}
\mathcal E_Z = \sum_{i,j\in\N} \delta_{p_i + \Delta_{j}^{(i)}}.
\end{equation}

The convergence when $t\to\infty$ under $\PP$ of the random measure $\mathcal E_{t,\omega}$ was proved in \cite{abk_extremal} and in \cite{aidekon_et_al}.
The difference with Theorem \ref{thm: ergodic} is that the dependence on $Z(\omega)$ is  averaged.
\begin{thm}[\cite{abk_extremal}]
\label{thm: extremal}
The random measure $\mathcal E_{t,\cdot}$ converges weakly under $\PP$ and as $t\to\infty$ to a mixture of Poisson cluster processes.
More precisely,  for every $f\in \mathcal C^+_c(\R)$, 
\begin{equation}
\label{eqn: space}
\lim_{t\uparrow\infty}\E\left[ \exp \left( -\int f(y) \mathcal E_{t,\omega}(\dd y) \right) \right] =
\E\left[  E\left[\exp \left(-\int f(y) \mathcal E_{Z}(\dd y) \right)\right] \right].
\end{equation}
(In the right side of \eqref{eqn: space}, the expectation $E$ is over the point process $\mathcal E_Z$ defined in \eqref{eqn: E_Z} for $Z$ fixed, whereas the expectation $\E$
is over the random variable $Z$).
\end{thm}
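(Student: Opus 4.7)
The plan is to prove convergence of the Laplace functional $\Phi_f(t) := \E\bigl[\exp\bigl(-\int f(y)\, \mathcal E_{t,\cdot}(\dd y)\bigr)\bigr]$ to the right-hand side of \eqref{eqn: space} for every $f \in \mathcal C_c^+(\R)$; by the Kallenberg criterion recalled in the remark, this yields the asserted weak convergence of $\mathcal E_{t,\cdot}$ under $\PP$. Apply McKean's identity \eqref{eqn: bbm_repr} with $\phi(x) := e^{-f(-x)}$ to obtain $\Phi_f(t) = 1 - u_f(t, -m(t))$, where $u_f$ solves the KPP equation \eqref{eqn: kpp} with initial datum $u_f(0, x) = 1 - e^{-f(-x)}$. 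Since $f$ is non-negative and compactly supported, $u_f(0,\cdot)$ is compactly supported with values in $[0,1)$, so Bramson's general convergence theorem (cited in the Appendix) applies: there is a traveling-wave shape $w_f$, depending on $f$, such that $u_f(t, m(t) + y) \to 1 - w_f(y)$ uniformly in $y$. Extending the Lalley--Sellke argument that yields \eqref{eqn: rep} to general initial data, $w_f$ admits the representation
\begin{equation*}
w_f(y) = \E\bigl[\exp\bigl(-C(f)\, Z\, e^{-\sqrt{2}\, y}\bigr)\bigr],
\end{equation*}
for a positive constant $C(f)$ characterized by the tail $1 - w_f(y) \sim C(f)\, y\, e^{-\sqrt{2}\, y}$ as $y \to \infty$. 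Evaluating at $y = 0$ gives $\lim_{t\to\infty} \Phi_f(t) = \E\bigl[e^{-C(f)\, Z}\bigr]$.

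For the proposed limit, compute the Laplace functional of $\mathcal E_Z$ directly from \eqref{eqn: E_Z}. Conditionally on $Z$, the centers $\{p_i\}$ form a Poisson point process with intensity $C_{\max} Z \sqrt{2}\, e^{-\sqrt{2} y}\, \dd y$, and the clusters are i.i.d.\ copies of $\mathcal D$, so standard Poisson Laplace-functional calculus gives
\begin{equation*}
E\Bigl[\exp\Bigl(-\int f(y)\, \mathcal E_Z(\dd y)\Bigr)\Bigr] = \exp\Bigl(-C_{\max}\, Z \int_\R \bigl(1 - \Psi_f(y)\bigr)\, \sqrt{2}\, e^{-\sqrt{2} y}\, \dd y\Bigr),
\end{equation*}
where $\Psi_f(y) := E\bigl[\exp\bigl(-\sum_j f(y + \Delta_j)\bigr)\bigr]$ is the Laplace functional of a single cluster translated by $y$. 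Taking $\E$ over $Z$ and comparing with the previous paragraph, \eqref{eqn: space} reduces to the identity
\begin{equation*}
C(f) = C_{\max} \int_\R \bigl(1 - \Psi_f(y)\bigr)\, \sqrt{2}\, e^{-\sqrt{2} y}\, \dd y,
\end{equation*}
which follows by the injectivity of the Laplace transform of the non-degenerate positive random variable $Z$.

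The main obstacle is this identification of constants: $C(f)$ arises abstractly from Bramson's asymptotics for $w_f$ at $+\infty$, whereas the right-hand side is expressed through the cluster law $\mathcal D$ of Theorem \ref{thm: cluster}. The natural probabilistic route is a two-scale decomposition. Pick $1 \ll s \ll t$; each particle $v \in \Sigma(s)$ initiates an independent BBM of length $t - s$ whose descendants contribute to the extremal window near $m(t)$ only when their maximum exceeds $m(t) - x_v(s)$. By Bramson's tail estimate, this probability is asymptotic, up to lower-order corrections, to $C_{\max}(\sqrt{2}\, s - x_v(s))\, e^{-\sqrt{2}(\sqrt{2}\, s - x_v(s))}$, and conditionally on contributing, the descendant extremal process converges in law to a shifted copy of $\mathcal D$ by Theorem \ref{thm: cluster}. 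Summing over $v \in \Sigma(s)$ and invoking $Z(s) \to Z$ almost surely from \eqref{defi_martingale}, one obtains in the limit a Poisson process of cluster centers with random intensity $C_{\max} Z \sqrt{2}\, e^{-\sqrt{2} y}\, \dd y$, each decorated by an independent copy of $\mathcal D$. The delicate technical steps are: (i) a truncation to ancestors whose position $x_v(s) - \sqrt{2}\, s$ lies in a moderate-deviation window, so that the tail asymptotics are uniformly valid and clusters from distinct ancestors are asymptotically disjoint; (ii) a justification of the iterated limit $t \to \infty$ followed by $s \to \infty$ with quantitative control on truncation errors; and (iii) a second-moment computation ruling out rare-event coincidences and exhibiting the Poisson independence structure. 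Carrying these three steps through identifies the limit with $\mathcal E_Z$ and in particular establishes the required identity for $C(f)$.
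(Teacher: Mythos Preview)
This theorem is not proved in the present paper; it is quoted from \cite{abk_extremal} (and independently \cite{aidekon_et_al}), so there is no proof here to compare against line by line. The paper does, however, record in Section~\ref{sect: LS} the two key identities from \cite{abk_extremal} that your outline is aiming for: the Lalley--Sellke-type representation \eqref{eqn: conv1} of the limiting Laplace functional as $\E[\exp(-C(f,\delta)Z)]$, and the cluster identity \eqref{eqn: C cluster} expressing $C(f,\delta)$ through the law of $\mathcal D$. Your sketch of the two-scale decomposition (condition on $\F_s$, use Bramson's tail asymptotics for each subtree, invoke Theorem~\ref{thm: cluster} for the conditioned cluster, and pass $s\to\infty$ via the derivative martingale) is indeed the strategy of \cite{abk_extremal}, and the three ``delicate technical steps'' you list are exactly where the work lies there.

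Two technical points in your write-up deserve correction. First, with $\phi(x)=e^{-f(-x)}$ in \eqref{eqn: bbm_repr} you obtain $1-u_f(t,-m(t))=\E\bigl[\prod_v e^{-f(m(t)-x_v(t))}\bigr]$, which has the wrong sign in the argument of $f$; the paper's convention \eqref{eqn: u} uses $-x+x_v(t)$ and evaluates at $x=m(t)$, not $-m(t)$. Second, and more substantively, you assert that Bramson's theorem in the Appendix applies because $u_f(0,\cdot)$ is compactly supported; but condition~2 of Theorem~\ref{bramson_fundamental_convergence} requires $u(0,\cdot)$ to be bounded away from zero on a left half-line, which fails for compactly supported data. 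This is precisely why the paper introduces the cutoff $f_\delta$ in Lemma~\ref{lem: approx1}: replacing $e^{-f}$ by $e^{-f}\1_{(-\infty,\delta]}$ forces $u_{f,\delta}(0,x)\to 1$ as $x\to-\infty$, so that Bramson's hypotheses are met, and the cutoff is harmless for the limit because the recentered maximum is tight. Your argument should incorporate this device (or an equivalent comparison argument) before invoking Bramson's convergence.
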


The proof of Theorem \ref{thm: ergodic} goes along the line of the proof of the convergence of the law of the maximum under time-average to a Gumbel distribution proved in \cite{abk_ergodic} and first conjectured in \cite{lalley_sellke}.
\begin{thm}[\cite{abk_ergodic}]
\label{thm: ergodic max}
The random variable $\max_{v\in\Sigma(t)} x_v(t)-m(t)$ converges weakly under $\langle \cdot \rangle_T$
to a Gumbel distribution for $\PP$-almost all $\omega$. Precisely, for $\PP$-almost all $\omega$, 
\beq
\lim_{T\uparrow \infty}\langle \1_{\{\max_{v\in\Sigma(t)} x_v(t)-m(t) \leq x\}} \rangle_T = \exp \Big(-C Z(\omega) e^{-\sqrt{2}x}\Big), \text{ for all $x\in \R$.}
\eeq
\end{thm}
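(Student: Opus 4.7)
The plan is to prove the almost-sure convergence of the time-average of $F_t(x) := \1_{\{\max_{v\in\Sigma(t)} x_v(t)-m(t) \le x\}}$ to the Gumbel quantity $u(x,\omega) := \exp\bigl(-C Z(\omega) e^{-\sqrt 2 x}\bigr)$. Since $x \mapsto F_t(x)$ and $x \mapsto u(x,\omega)$ are both non-decreasing and $u(\cdot,\omega)$ is continuous, a standard sandwich argument reduces the claim to convergence at each $x$ in a fixed countable dense subset of $\R$; the resulting countable family of null sets unites into a single $\PP$-null set, producing the set $\Omega_0$ of the statement.

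Fix such an $x$. The backbone is a two-scale conditioning. For an intermediate time $s \ge 0$, conditioning on $\FF_s$ turns the BBM into $n(s)$ independent BBMs launched from the positions $\{x_v(s)\}$; applying Bramson's convergence (Theorem \ref{bramson_fundamental_convergence}) to the resulting initial datum and using the representation \eqref{eqn: rep} yields
\[
\E\bigl[F_t(x) \,\big|\, \FF_s\bigr] \ \xrightarrow[t\to\infty]{}\ g_s(x) \,:=\, \exp\bigl(-C\, Z(s)\, e^{-\sqrt 2 x}\bigr),
\]
while $Z(s) \to Z$ almost surely forces $g_s(x) \to u(x,\omega)$ as $s\to\infty$ off a null set. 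Writing
\[
\langle F(x) \rangle_T \,=\, \bigl\langle F(x) - \E[F(x)\mid \FF_s] \bigr\rangle_T + \bigl\langle \E[F(x)\mid \FF_s] \bigr\rangle_T,
\]
Ces\`aro sends the second summand to $g_s(x)$ as $T\to\infty$ along typical realisations; the matter reduces to showing the first summand vanishes almost surely when first $s$ and then $T$ are sent to infinity.

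To this end I would control the conditional $L^2(\PP\mid \FF_s)$-norm of the first summand. Expanding the square produces a double integral of conditional covariances; for $s < t < t'$, the tower property gives
\[
\E[F_t F_{t'}\mid \FF_s] \,=\, \E\bigl[F_t\, \E[F_{t'}\mid \FF_t]\,\big|\, \FF_s\bigr].
\]
Bramson applied now with origin $t$ and horizon $t'-t \to \infty$ gives $\E[F_{t'}\mid \FF_t] \approx g_t(x)$, and the martingale convergence $Z(t),Z(s)\to Z$ renders both $g_t(x)$ and $\E[F_{t'}\mid \FF_s]\approx g_s(x)$ close to $u(x,\omega)$ for $s,t$ large. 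The covariance therefore collapses to $g_s(x)\,\E[F_t\mid \FF_s] - g_s(x)\,\E[F_t\mid \FF_s] = 0$ up to error terms that can be made arbitrarily small outside a diagonal band $|t-t'|\le R$. Since the covariance is bounded by $1$ in absolute value, the band contributes $O(R/T)\to 0$ to the normalised conditional variance, giving $L^2$ convergence; a Chebyshev bound along a sufficiently fast subsequence $T_n\uparrow\infty$, combined with the monotonicity of $T\mapsto \int_0^T F_t(x)\,\dd t$ to interpolate between consecutive $T_n$, upgrades this to almost-sure convergence.

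The principal obstacle is quantitative: one needs a rate in Bramson's theorem for the convergence $\E[F_{t'}\mid \FF_t] \to g_t(x)$, uniform outside an event of small $\PP$-probability, together with an explicit tail bound for $|Z(t)-Z|$. Only such control permits integrating the decorrelation estimate over the square $[s,T]^2$ to obtain a variance bound whose dependence on $s$ and on the diagonal width $R$ can be tuned so that the resulting null sets (over the dense countable set of $x$'s and over the choices of $s$) can be combined into a single $\PP$-full set $\Omega_0$ on which the stated identity holds simultaneously for every $x \in \R$.
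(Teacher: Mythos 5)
Your overall architecture---split $\langle \1_{\{\max_v x_v(t)-m(t)\le x\}}\rangle_T$ into a conditional expectation given an intermediate $\sigma$-algebra (handled by Bramson's convergence and the Lalley--Sellke representation) plus a fluctuation term (handled by a second-moment decorrelation estimate)---is exactly the scheme of \eqref{eqn: decomp 1}--\eqref{eqn: decomp 3}, Proposition \ref{prop: LS} and Proposition \ref{prop: LLN}. But there is a genuine gap in the fluctuation part: you condition on $\FF_s$ for a \emph{fixed} $s$, claim the averaged fluctuation vanishes almost surely as $T\to\infty$, and only then send $s\to\infty$. For fixed $s$ this is false. Writing $F_t=\1_{\{\max_v x_v(t)-m(t)\le x\}}$ and $u(x,\omega)=\exp(-CZ(\omega)e^{-\sqrt 2 x})$, one has $\E[\mathrm{Cov}(F_t,F_{t'}\mid\FF_s)]\to\E[\mathrm{Var}(u(x,\cdot)\mid\FF_s)]>0$ as $t,t'\to\infty$ with $t'-t\to\infty$: both $F_t$ and $F_{t'}$ remain correlated with the limit $Z$ of the derivative martingale, which is \emph{not} $\FF_s$-measurable, so your ``collapse'' of the covariance illegitimately treats $g_t\approx\exp(-CZe^{-\sqrt2 x})$ as $\FF_s$-measurable and independent of $F_t$. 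Consequently the variance of $\langle F-\E[F\mid\FF_s]\rangle_T$ is bounded below by a positive constant $\epsilon_s$ uniformly in $T$, the Chebyshev--Borel--Cantelli sums over a subsequence $T_n$ diverge, and the a.s.\ limit of your first summand (granting the theorem) is $u(x,\omega)-\lim_t\E[F_t\mid\FF_s]\neq 0$. The indispensable device is to let the conditioning time grow with $T$: take $s=R_T\to\infty$ with $R_T=o(\sqrt T)$ and restrict to $t\in[\varepsilon T,T]$. Then the off-diagonal covariance for $|t-t'|\ge R_T$ is $O(e^{-R_T^\kappa})$---and this rate is supplied not by a quantitative Bramson theorem, as you suggest, but by the genealogical input (Theorem \ref{thm: overlaps} together with the path localization of Proposition \ref{prop: loc}: extremal particles at well-separated times have branched before time $R_T$, hence are conditionally independent given $\FF_{R_T}$ up to that error)---while the diagonal band contributes $O(R_T/T)$; both are summable against $1/T$, so Lyons' strong law (Theorem \ref{lyons_int}) closes the a.s.\ argument.

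A secondary but related inaccuracy: for fixed $s$, $\lim_{t\to\infty}\E[F_t(x)\mid\FF_s]$ is not $\exp(-CZ(s)e^{-\sqrt2 x})$ but the product of travelling waves $\prod_{i\le n(s)}w\big(x+\sqrt 2 s-x_i(s)\big)$, each factor of which, by \eqref{eqn: rep}, carries an expectation over an independent copy of $Z$. The derivative-martingale form emerges only in the subsequent limit $s\to\infty$, through the tail asymptotics $1-w(y)\sim C\,y\,e^{-\sqrt 2 y}$ together with $Y(s)\to 0$ and $Z(s)\to Z$; this is precisely the content of Lemma \ref{lem: fundamental_tail} and Proposition \ref{prop: LS}, where the uniform divergence of $\min_j y_j(R_T)$ (hence again $R_T\to\infty$) is what legitimizes the linearization. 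With the moving scale $R_T$ installed in both halves, your outline becomes the proof given in \cite{abk_ergodic} and generalized in Sections \ref{sect: LS}--\ref{sect: LLN} above.
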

Theorem \ref{thm: ergodic} extends this result.
In particular, a precise result on the branching times of extremal particles at different time scales is needed, cf. Theorem \ref{thm: overlaps}. This result
 is of independent interest.


\section{Outline of the Proof}

To prove Theorem \ref{thm: ergodic}, one has to find $\Omega_0$ of probability one on which
\beq
\left\langle~ \exp \left( -\int f(y) ~\mathcal E_{t,\omega}(\dd y) \right) ~\right\rangle_T
\eeq
converges simultaneously for all $f\in \mathcal C_c^+(\R)$. As explained in Section \ref{sect: approx}, the convergence on countable set of functions in $ \mathcal C_c^+(\R)$ is in fact  sufficient. Thus one only needs to prove almost sure convergence for a given function $f$. Moreover, due to the fact that the recentered maximum of BBM is stochastically bounded, one can introduce a cutoff on large values of $y$.

Take $\varepsilon>0$ and $R_T$ such that $R_T \ll T$. 
For a given $f\in \mathcal C_c^+(\R)$, consider the decomposition
\begin{eqnarray}
\left\langle~ \exp \left( -\int f(y) \mathcal E_{t,\omega}(\dd y) \right) \right\rangle_T 
& = &\frac{1}{T}\int_0^{\varepsilon T}  \exp \left( -\int f(y)  \mathcal E_{t,\omega}(\dd y) \right) ~ \dd t \label{eqn: decomp 1}\\
&+ &
\frac{1}{T}\int_{\varepsilon T}^{ T}  \E\left[ \exp \left( -\int f(y) \mathcal E_{t,\omega}(\dd y) \right) \Big|  \F_{R_T}\right] \dd t \label{eqn: decomp 2}\\
&+&
\frac{1}{T}\int_{\varepsilon T}^T Y_t(\omega)  \dd t, \label{eqn: decomp 3}
\end{eqnarray}
where 
\begin{equation}
\label{eqn: Y}
Y_t(\omega)\equiv  \exp \left( -\int f(y) ~\mathcal E_{t,\omega}(\dd y) \right) - \E\left[\exp \left( -\int f(y) \mathcal E_{t,\omega}(\dd y) \right) \Big|  \F_{R_T}\right].
\end{equation}
The term \eqref{eqn: decomp 1} can be made arbitrarily small uniformly in $T$ by taking $\varepsilon$ small.
The term \eqref{eqn: decomp 2} is shown to converge almost surely to the right side of \eqref{eqn: ergodic} in Section \ref{sect: LS}.
The treatment is based on the convergence result of Lalley and Sellke \cite{lalley_sellke} and is a generalization of Theorem 2 in \cite{abk_ergodic}.
The condition $t\in [\varepsilon T, T]$ is needed there, because one needs $R_T\ll t$. 
The term \eqref{eqn: decomp 3} is shown to converge to $0$ almost surely in Section \ref{sect: LLN}. 
This is similar in spirit to the law of large numbers proved in \cite{abk_ergodic}. 
However, the proof is simplified here by the explicit use of a new theorem of independent interest about the common ancestor of extremal particles at two different times.
Precisely, for a fixed compact interval $I$, denote by $\Sigma_I(t)$ the particles at time $t$ that are in the set $I+m(t)$,
\begin{equation}
\label{eqn: Sigma}
\Sigma_I(t)\equiv\{v\in \Sigma(t): x_v(t)- m(t)\in I\}.
\end{equation}
Take $t>0$ and $t'>0$ such that $t'>t$. Let $v\in \Sigma_I(t)$ and $v'\in \Sigma_I(t')$. 
Define the {\it branching time} between $v$ and $v'$ as
\begin{equation}
\label{eqn: Q}
\text{ for $v \in  \Sigma_I(t)$ and  $v'\in \Sigma_I(t')$}, ~Q(v,v')=\sup\{ s\leq t: x_v(s)=x_{v'}(s)\}.
\end{equation}
Note that if $v'$ is a descendant of $v$ then $Q(v,v')=t$. 
\begin{thm}
\label{thm: overlaps}
Let $I$ be a compact interval of $\R$. 
There exist $C>0$ and $\kappa>0$ such that
\begin{equation}
\sup_{\substack{t>3r\\ t'>t+r}}\PP\left(\exists v \in \Sigma_I(t), v'\in \Sigma_I(t'): Q(v,v')\in [r,t] \right)\leq C e^{-r^\kappa}.
\end{equation}
\end{thm}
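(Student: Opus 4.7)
My plan is to use a first moment argument refined by Bramson's barrier estimates for extremal BBM particles. By Markov's inequality, the probability in question is bounded by $\E[N_{r,t}]$, where $N_{r,t}$ counts ordered pairs $(v,v')$ with $v\in\Sigma_I(t)$, $v'\in\Sigma_I(t')$ and $Q(v,v')\in[r,t]$. I will decompose $N_{r,t}$ according to the branching time $s=Q(v,v')$, treating separately the generic case $s\in[r,t)$ and the descendant case $s=t$ (in which $v'$ is a descendant of $v$).

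For the generic case I apply a many-to-two style formula: at time $s$ the common ancestor sits at some position $z$, and after the branching event the two sibling sub-BBMs evolve independently, one carrying an extremal descendant to time $t$ and the other to time $t'$. This yields a bound of the form
$$\E[N_{r,t}^{\mathrm{gen}}]\le C\int_r^t ds\int_{\R}dz\,\rho(s,z)\,g(s,z;t)\,g(s,z;t'),$$
where $\rho(s,z)$ is the expected density of Bramson-barrier respecting ancestors at $(s,z)$ and $g(s,z;\tau)$ is the expected number of barrier-respecting extremal descendants at time $\tau$ of a sub-BBM rooted at $(s,z)$. I then apply the Girsanov shift $B(u)\mapsto B(u)-\sqrt 2\,u$, which turns the drifted trajectories into Brownian motion with drift $-\sqrt 2$ and the linear barrier into a horizontal one. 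Standard ballot/reflection estimates for Brownian bridges below a horizontal barrier produce polynomial decay factors in each of $s$, $t-s$, and $t'-s$; Gaussian integration in the variable $\xi:=z-\sqrt 2\,s$ then concentrates $\xi$ in a bounded window.

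For the descendant case $s=t$, the Markov property at time $t$ makes the subtree of any $v\in\Sigma_I(t)$ a fresh BBM of length $r':=t'-t\ge r$. An extremal descendant at time $t'$ must land in an interval whose shift from $m(r')$ equals $\tfrac{3}{2\sqrt 2}\log(tr'/t')+O(1)$; under the constraints $t>3r$ and $t'\ge t+r$ this shift is uniformly of order $\log r$. The exponential tail $1-w(y)\sim Ce^{-\sqrt 2 y}$ of the traveling wave together with Bramson's bound on the expected number of particles above a shifted level then furnishes decay for this contribution.

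The main obstacle will be sharpening the per-branching-time estimates so that their integral over $s\in[r,t]$ yields the stretched-exponential rate $Ce^{-r^\kappa}$ rather than merely polynomial decay in $r$. The extra suppression must come from exploiting the joint rigidity forced on the ancestor position $z$: the log-corrections in $m(t)$ and $m(t')$ differ, so $z$ has to match both simultaneously, confining it to a narrow window that is very rarely populated at an intermediate time. Quantifying this double constraint via the refined convergence estimates for the KPP equation in \cite{bramson_monograph} is where the bulk of the technical work will lie.
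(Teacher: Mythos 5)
Your overall architecture --- Markov's inequality on the number of pairs, a many-to-two (Sawyer) decomposition over the branching time $s$, Girsanov tilting and ballot estimates for the two sub-lineages --- is exactly the paper's. But you have misidentified where the stretched-exponential decay comes from, and the mechanism you propose in its place cannot work. The constraint on the common ancestor at time $s$ is one-sided (it must lie low enough to produce extremal descendants at both horizons), not a two-sided matching condition, so the discrepancy between the logarithmic corrections in $m(t)$ and $m(t')$ does not confine $z$ to a narrow window; and even if it did, a window of width $O(\log)$ in a particle density varying on scale $O(1)$ could only yield polynomial decay in $r$. Indeed, the unrestricted first-moment computation is neutral: the expected number of time-$s$ ancestors at depth $x$ below $\sqrt2\,s$ behaves like $e^{\sqrt2 x}$ (up to Gaussian and polynomial corrections), each such ancestor produces an extremal descendant at a later horizon with probability of order $xe^{-\sqrt2 x}$, so requiring two independent extremal offspring costs $x^2e^{-2\sqrt2 x}$, and the integral over $x$ is $O(1)$ uniformly in $s$. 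No sharpening of the KPP asymptotics changes this.

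The missing input is the path localization (entropic repulsion) of extremal particles, Proposition \ref{prop: loc} in the paper (from \cite{abk_genealogies}): with probability at least $1-Ce^{-r^\kappa}$, every $v\in\Sigma_I(t)$ satisfies $x_v(s)\le F_{\alpha,t}(s)=\frac{s}{t}m(t)-\min(s,t-s)^{\alpha}$ for all $s\in(r,t-r)$, i.e.\ its path dips at least $\min(s,t-s)^{\alpha}$ \emph{below} the interpolating line at intermediate times. After discarding the complementary event, the common ancestor at time $s=Q(v,v')\ge r$ is forced to depth $x\gtrsim s^{\alpha}$, the integral above becomes $\int_{s^{\alpha}}^{\infty}x^2e^{-\sqrt2 x}\,\dd x\lesssim s^{2\alpha}e^{-\sqrt2 s^{\alpha}}$ (schematically), and integrating over $s\in[r,t]$ gives the claimed $Ce^{-r^{\kappa}}$. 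The same localization, applied to the path of $v'$ over $[0,t']$ at the intermediate time $t$, is also what disposes of the descendant case $Q(v,v')=t$: on the localization event $x_{v'}(t)\le F_{\alpha,t'}(t)<m(t)+\min I$ for $r$ large, so $v'$ cannot pass through $\Sigma_I(t)$. Your travelling-wave tail argument for that case yields only polynomial decay in $r$, and is further undermined by the fact that $\E\big[|\Sigma_I(t)|\big]$ grows like $t$. Without the localization theorem your program stalls exactly at the point you flag as the ``main obstacle.''
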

This is a generalization of Theorem 2.1 in  \cite{abk_genealogies}. The technique of proof is similar and is based on the localization of the paths of extremal particles.
The theorem means that with large probability and for two (sufficiently distant) times, the extremal particles come from different ancestors at time $r$. 
Hence, they are conditionally independent given $\F_{r}$. 
This gives enough independence between the variable $Y_s$ to derive the desired convergence of \eqref{eqn: decomp 3} using standard arguments.


\section{Proofs}
\subsection{Approximation in $\mathcal C_c^+(\R)$}
\label{sect: approx}
We first state a lemma that that shows that our task is reduced to prove almost sure convergence for a given function $f$ with an additional cutoff.
\begin{lem}
\label{lem: approx1} Theorem  \ref{thm: ergodic} holds, if 
for any given $f\in \mathcal C^+_c(\R)$, and for any $\delta\in\R$,
\begin{equation}
\lim_{T\to\infty}\left\langle~ \exp \left(-\int f_\delta(y) ~\mathcal E_{t,\omega}(\dd y)\right)~\right\rangle_T = 
E\left[  \exp \left( -\int f_\delta(y) ~\mathcal E_{Z(\omega)}(\dd y) \right) \right]  \ \text{ $\PP$-a.s.,}
\end{equation}
where $f_\delta $ is defined by $\exp(-f_\delta(y))=\exp(-f(y))\1_{(-\infty,\delta]}(y)$.

\end{lem}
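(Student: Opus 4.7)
The strategy is to derive the convergence \eqref{eqn: ergodic} from the hypothesized convergence of the cutoff Laplace functionals by two successive reductions: (i)~for each fixed $f$, pass from $f_\delta$ to $f$ by letting $\delta\uparrow\infty$; (ii)~handle the uniformity-in-$f$ issue by restricting to a countable dense subfamily, so that a single $\PP$-null set has to be excluded.

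For step (i), the key observation is the elementary bound, valid for any point measure $\mu$ on $\R$,
\beq
\label{eqn: plan_pointwise}
0 \leq e^{-\int f\, d\mu} - e^{-\int f_\delta\, d\mu} \leq \1\bigl\{\mu((\delta,\infty))>0\bigr\},
\eeq
since on the event $\{\mu((\delta,\infty))=0\}$ one has $f=f_\delta$ on the support of $\mu$ and the two exponentials agree, while in general both lie in $[0,1]$. Applying \eqref{eqn: plan_pointwise} to $\mu=\mathcal E_{t,\omega}$ and time-averaging gives
\beq
\left|\left\langle e^{-\int f\, d\mathcal E_{t,\omega}}\right\rangle_T - \left\langle e^{-\int f_\delta\, d\mathcal E_{t,\omega}}\right\rangle_T\right|
\leq
\left\langle \1\bigl\{\textstyle\max_{v} x_v(t)-m(t)>\delta\bigr\}\right\rangle_T.
\eeq
By Theorem \ref{thm: ergodic max}, the right-hand side converges $\PP$-a.s.\ to $1-\exp(-CZ(\omega)e^{-\sqrt 2\delta})$, which vanishes as $\delta\to\infty$. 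On the limit side, dominated convergence and the fact that the Poisson cluster process $\mathcal E_{Z(\omega)}$ has only finitely many atoms above any threshold a.s.\ yield $E\bigl[e^{-\int f_\delta\, d\mathcal E_{Z(\omega)}}\bigr]\to E\bigl[e^{-\int f\, d\mathcal E_{Z(\omega)}}\bigr]$ as $\delta\to\infty$. Sandwiching $\liminf_T$ and $\limsup_T$ of $\langle e^{-\int f\, d\mathcal E_{t,\omega}}\rangle_T$ between translates of $E[e^{-\int f_\delta\, d\mathcal E_{Z(\omega)}}]$ and an error shrinking in $\delta$ then forces the conclusion for $f$.

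For step (ii), I would fix a countable family $\{f_k\}\subset \mathcal C_c^+(\R)$ that is dense in the sup-norm topology on each $\mathcal C_c^+([-n,n])$, together with a sequence of cutoffs $\delta_j\uparrow\infty$. The hypothesis provides, for each pair $(k,j)$, a full-probability event of convergence; intersecting these countably many events with the $\PP$-null set coming from Theorem \ref{thm: ergodic max} gives a single $\Omega_0$ with $\PP(\Omega_0)=1$ on which the above cutoff-removal argument applies simultaneously for every $f_k$. Extending the conclusion from $\{f_k\}$ to arbitrary $f\in\mathcal C_c^+(\R)$ is then carried out by a standard uniform approximation: given $f$ with support in $[-K,K]$, pick $f_k\to f$ uniformly with supports in $[-K,K]$; then for any $\delta>K$,
\beq
\left|e^{-\int f\, d\mathcal E_{t,\omega}} - e^{-\int f_k\, d\mathcal E_{t,\omega}}\right|
\leq \|f-f_k\|_\infty\, \mathcal E_{t,\omega}([-K,K]) \wedge 1,
\eeq
and the right-hand quantity is controlled, after applying the cutoff trick \eqref{eqn: plan_pointwise} one more time to the constant-function test on $[-K,K]$, by the same $\max_v$-event used above. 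The main obstacle I anticipate is precisely this last extension: a priori, $\mathcal E_{t,\omega}(K)$ is not uniformly bounded in $t$, so some quantitative control of the number of extremal particles in a compact set — again derived by reduction to the tightness of the recentered maximum via Theorem \ref{thm: ergodic max} — is required to close the approximation cleanly.
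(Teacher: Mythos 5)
Your proposal takes the same route as the paper, whose own proof of this lemma is only a two-sentence sketch (Stone--Weierstrass for a countable dense family, plus stochastic boundedness of the support from above for the cutoff). Your step (i) is a correct and complete fleshing-out of the cutoff removal: the pointwise bound $0\le e^{-\int f\,d\mu}-e^{-\int f_\delta\,d\mu}\le \1\{\mu((\delta,\infty))>0\}$ is exactly right, Theorem \ref{thm: ergodic max} controls the time-average of the right-hand side, and the limit side is handled by the a.s.\ finiteness of $\mathcal E_{Z}((\delta,\infty))$.

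The one place where your plan goes astray is the final approximation step, and you have correctly identified it as the weak point. The quantity $\mathcal E_{t,\omega}([-K,K])$ is \emph{not} controlled by the tightness of the recentered maximum: the maximum bounds the support of the point measure from above but says nothing about how many particles sit in a fixed compact window. The clean way to close this is not via Theorem \ref{thm: ergodic max} but via the hypothesis of the lemma itself: choose a continuous $g\in\mathcal C_c^+(\R)$ with $g\ge \1_{[-K,K]}$, note that $1-e^{-\eta\,\mathcal E_{t,\omega}([-K,K])}\le 1-e^{-\eta\int g\,d\mathcal E_{t,\omega}}$, include the functions $\eta g$ (for rational $\eta$) in your countable family, and use the assumed convergence of $\langle e^{-\eta\int g\,d\mathcal E_{t,\omega}}\rangle_T$ to $E[e^{-\eta\int g\,d\mathcal E_{Z(\omega)}}]$, which tends to $1$ as $\eta\downarrow 0$ because $\mathcal E_{Z(\omega)}$ is a.s.\ locally finite. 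With that substitution your argument is complete; it is considerably more detailed than what the paper records.
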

\begin{proof}
The Stone-Weierstrass theorem implies the existence of a countable set of dense functions in $\mathcal C_c^+(\R)$ under the uniform topology. 
This reduces the proof to almost sure convergence for a given $f\in\mathcal C_c^+(\R)$.
There is also no loss of generalities in introducing the cutoff $\1_{(-\infty,\delta]}(y)$ since the support of 
$\mathcal E_{\omega,t} $ is stochastically bounded from above (by Theorem \ref{thm: ergodic max}).
\end{proof}


\subsection{A convergence result of Lalley and Sellke}
\label{sect: LS}

For a given $f\in \mathcal C^+_c(\R)$ and $\delta\in\R$, define
\begin{equation}
\label{eqn: u}
u_{f,\delta}(t,x) =1-\E\left[ \prod_{v\in\Sigma(t)} \exp\Big(-f_\delta\big(-x + x_v(t) \big)\Big) \right].
\end{equation}
Note that $u_{f,\delta}(0,x)$ is $0$ for $x$ large enough because of the indicator function.
Convergence of $u_{f,\delta}(t,x+m(t))$ as $t\to\infty$ has been established by Bramson \cite{bramson_monograph}, see Theorem \ref{bramson_fundamental_convergence} in the Appendix for a complete statement.
Using the representation of Lalley and Sellke and arguments of Chauvin and 
Rouault, one can show that (see Lemma 3.8 in \cite{abk_extremal}),
\beq
\label{eqn: conv1}
\lim_{t\to\infty} u_{f,\delta}(t,x+m(t))= \E\left[\exp\Big(-C(f,\delta) Z(\omega) \Big) \right]
\eeq
where
\begin{equation}
\label{eqn: C}
C_r(f,\delta) = \sqrt{\frac{2}{\pi}}  \int_0^\infty u_{f,\delta}(r , y +\sqrt{2} r) ~y e^{y\sqrt{2}}\dd y 
\eeq
and $
C(f,\delta) \equiv \lim_{r\to\infty} C_r(f,\delta)$.
Moreover, it was established in \cite{abk_extremal} (see proof of Theorem 2.6 in \cite{abk_extremal})
 that if $\mathcal D$ is the cluster process with expectation $E$ introduced in \eqref{eqn: D},
then
\begin{eqnarray}
\label{eqn: C cluster}\nonumber
C(f,\delta) 
&=&\int \left(1-E\left[\exp\left(- \int f_\delta(y+z) ~\mathcal D(dz)\right)\right]  \right) ~\sqrt{2}e^{-\sqrt{2}y}\dd y.
\end{eqnarray}
The proof of this in \cite{abk_extremal} is written for without the cutoff but it extends in a straightforward way.
Note that $\exp(-C(f,\delta)Z)$ is  the Laplace functional
of the process $\mathcal E_Z$ for the test function $f$ with the cutoff.

The proof of the next lemma is similar to that of Lemma 4 in 
\cite{abk_ergodic}. 
We present it for completeness.
It is based on an estimate of Bramson, see Proposition 8.3 and its proof in \cite{bramson_monograph}, that were adapted 
to the extremal process setting in Proposition 3.3 of \cite{abk_extremal}.

\begin{lem} 
\label{lem: fundamental_tail}
Consider $t\geq 0$ and $X(t)\geq  0$ such that $\lim_{t\uparrow \infty} X(t) = + \infty$ and
$X(t) = o(\sqrt{t})$. Then, for any fixed $r$ such that $t\geq 8r$ and $t$ large enough so that $X(t)\geq 8r -\frac{3}{2\sqrt{2}}\log(t)$,
\beq \label{eqn: tail_max}
\gamma(r)^{-1} C_r(f,\delta) X(t) e^{-\sqrt{2} X(t)} \big(1+o(1)\big) \leq u_{f,\delta}\big(t, X(t)+m(t)\big) \leq \gamma(r) C_r(f,\delta) X(t) e^{-\sqrt{2} X(t)}
\eeq
where $o(1)$ is a term that tends to $0$ as $t\to\infty$ for $r$ fixed.
\end{lem}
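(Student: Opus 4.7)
The plan is to apply Bramson's fundamental estimate for solutions of the KPP equation \eqref{eqn: kpp} in the form tailored to extremal-process initial data, namely Proposition 3.3 in \cite{abk_extremal}. First I would verify the hypotheses. By the McKean representation \eqref{eqn: bbm_repr}, $u_{f,\delta}$ solves \eqref{eqn: kpp} with initial datum $1-\phi$, where $\phi(x) = e^{-f(x)} \1_{(-\infty,\delta]}(x)$. Because of the cutoff $\1_{(-\infty,\delta]}$, the function $1-\phi$ lies in $[0,1]$ and has support bounded on the right by $\delta$, which is exactly the class of initial conditions for which the proposition is stated.

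The proposition then yields, for $t \geq 8r$ and arguments satisfying the lower bound $X(t) \geq 8r - \tfrac{3}{2\sqrt{2}}\log t$, a two-sided comparison of the form
\begin{equation}
\gamma(r)^{-1}\,\widehat u(t,X(t))\,(1+o(1)) \;\leq\; u_{f,\delta}(t,X(t)+m(t)) \;\leq\; \gamma(r)\,\widehat u(t,X(t)),
\end{equation}
where $\widehat u(t,X)$ is an explicit Gaussian-type functional of the restart data at time $r$. Schematically,
\begin{equation}
\widehat u(t,X) \;\propto\; \frac{e^{-\sqrt{2}X - X^2/(2t)}}{(t-r)^{1/2}} \cdot (\text{polynomial in } t) \cdot \int_0^\infty u_{f,\delta}(r, y+\sqrt{2}r)\, y\, e^{\sqrt{2}y}\,\dd y,
\end{equation}
and by the definition \eqref{eqn: C} the last integral equals $\sqrt{\pi/2}\,C_r(f,\delta)$.

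To finish, I would use the hypothesis $X(t) = o(\sqrt t)$ to discard the Gaussian correction: since $X(t)^2/t \to 0$, one has $e^{-X(t)^2/(2t)} = 1 + o(1)$ as $t\to\infty$ with $r$ fixed. The polynomial-in-$t$ factors coming from the heat kernel and from the logarithmic correction $-\tfrac{3}{2\sqrt{2}}\log t$ in $m(t)$ cancel exactly, leaving the advertised linear prefactor $X(t)$ and the exponential $e^{-\sqrt{2}X(t)}$. After absorbing all remaining $r$-dependent constants into a redefinition of $\gamma(r)$, the bound \eqref{eqn: tail_max} follows.

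The main obstacle is the careful bookkeeping of the Gaussian and polynomial prefactors: one has to verify that the $(t-r)^{-3/2}$ from the heat-kernel weight, the $e^{-\sqrt{2}\,m(t)}$ from recentering, and the $X(t)^2/t$ correction from the Gaussian exponent all combine exactly to produce the $X(t) e^{-\sqrt{2}X(t)}$ factor, with an error that is $1+o(1)$ precisely when $X(t)=o(\sqrt t)$. This cancellation is the technical heart of Bramson's estimate and is already carried out in \cite{bramson_monograph} and \cite{abk_extremal}; the only thing to check here is that the present hypotheses on $X(t)$ place us squarely in the regime where the cited two-sided estimate holds uniformly, with the constant $\gamma(r)$ independent of both $t$ and of the function $X(\cdot)$.
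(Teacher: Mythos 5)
Your proposal follows essentially the same route as the paper: both invoke Proposition 3.3 of \cite{abk_extremal} (Bramson's Proposition 8.3 adapted to these initial data) to sandwich $u_{f,\delta}(t,\cdot+m(t))$ between $\gamma(r)^{\pm 1}$ times an explicit functional of the time-$r$ restart data, and then use $X(t)=o(\sqrt{t})$ to reduce that functional to $C_r(f,\delta)\,X(t)\,e^{-\sqrt{2}X(t)}\big(1+o(1)\big)$ via Taylor expansion of the Gaussian and of the factor $1-e^{-2y'X/(t-r)}$. The one step you elide that the paper spells out is the finiteness of $\int_0^\infty (y')^n e^{\sqrt{2}y'}\,u_{f,\delta}(r,y'+\sqrt{2}r)\,\dd y'$ for $n\le 3$ (obtained by comparison with the linearized KPP equation, as in \cite{chauvin_rouault}), which is needed so that the error terms in the lower bound are genuinely $o(1)$.
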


\begin{proof}
Define 
\begin{eqnarray} \label{eqn: ugly_one} \nonumber
&& \psi(r, t, x+\sqrt{2} t) \defi  \frac{e^{-\sqrt{2}x}}{\sqrt{t-r}} \int_0^\infty \frac{ \dd y'}{\sqrt{2\pi}} \cdot u_{f,\delta}(r, y'+\sqrt{2}r) \cdot e^{y'\sqrt{2}}  \\
&& \hspace{2cm} \times  \left\{ 1- \exp\left( -2 y'\frac{x+\frac{3}{2\sqrt{2}}\log t }{t-r} \right) \right\} \exp\left( -\frac{(y'-x)^2}{2(t-r)} \right) .
\end{eqnarray}
Note that
\beq
\int_0^\infty y e^{\sqrt{2} y} u_{f,\delta}(0,y) \dd y < \infty .
\eeq
Proposition 3.3 in \cite{abk_extremal}
implies that for $r$ large enough, $t\geq 8r$, and $x\geq 8r -\frac{3}{2\sqrt{2}}\log t $, 
\beq \label{eqn: ugly_two}
\gamma(r)^{-1}  \psi(r, t, x+\sqrt{2} t) \leq u_{f,\delta}(t, x +\sqrt{2}t) \leq \gamma(r)   \psi(r, t, x+\sqrt{2} t)
\eeq
for some $\gamma(r) \downarrow 1$ as $r\to \infty$. 
As $\sqrt{2} t = m(t)+ \frac{3}{2\sqrt{2}}\log(t)$, by taking $x= X(t) - \frac{3}{2\sqrt{2}}\log t$, and $X=X(t)$
\beq \label{eqn: ugly_three}
\gamma(r)^{-1}  \psi(r, t, X  + m(t)) \leq u_{f,\delta}(t, X + m(t)) \leq \gamma(r)   \psi(r, t, X+ m(t)).
\eeq
It remains to estimate $ \psi(r, t, X  + m(t))$.
By Taylor expansion, since $y'$ is positive and so is $X$, $t$ large enough, 
\beq \label{eqn: ugly_approx} 
\frac{2y'X}{t-r} - \frac{2{y'}^2 X^2}{(t-r)^2}  \leq  1- e^{-2 y'\frac{ X }{t-r} }    \leq \frac{ 2 y' X }{t-r} 
\eeq
and
\beq \label{eqn: ugly_approx.bis} 
1-\frac{(y'- X +\frac{3}{2\sqrt{2}}\log t)^2}{2(t-r)}  \leq  e^{ -\frac{(y'- X +\frac{3}{2\sqrt{2}}\log t)^2}{2(t-r)} }  \leq 1. 
\eeq

The upper bound \eqref{eqn: tail_max} follows from plugging the upper bounds of \eqref{eqn: ugly_approx} and \eqref{eqn: ugly_approx.bis} into 
\eqref{eqn: ugly_one}.
As for the lower bound, note that the lower order terms of the lower bounds \eqref{eqn: ugly_approx} and  \eqref{eqn: ugly_approx.bis}
are all of order $o(1)$, when $t\to\infty$ for $r$ fixed. 
To complete the proof, one has to prove that for $r$ fixed,
\beq
 \sqrt{ \frac{ 2 }{\pi}} \int_0^\infty  (y')^n  e^{y'\sqrt{2}}   u_{f,\delta}(r, y'+\sqrt{2}r)~ \dd y' <\infty, \text{ for $n=0,1,2,3$.}
\eeq
The integrability is shown by bounding $u_{f,\delta}$ above by the solution 
of the linearized KPP equation. 
This is done exactly as in Proposition 3.4, equation (3.18), in 
\cite{abk_extremal} following the argument of \cite{chauvin_rouault}.
We refer the reader to these papers for the details.
\end{proof} 

\begin{prop}
\label{prop: LS}
Fix $\vare >0$. 
Let  $R_T =o(\sqrt{T})$ with $\lim_{T\to\infty} R_T=+\infty$. Then for any $t\in [\vare T, T]$, 
\beq
\label{eqn: bond identity}
\lim_{T \uparrow \infty} 
\E \left[  \exp\left(-\int f_\delta(y) ~\mathcal{E}_{t,\omega }(\dd y)\right) \Big| \F_{R_T}\right]
= \exp\left( - C(f,\delta) Z(\omega)  \right) \text{ $\PP$-a.s.}
\eeq
\end{prop}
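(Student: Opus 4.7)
The plan is to exploit the branching property at time $R_T$. Conditioning on $\F_{R_T}$, the subtrees rooted at the particles alive at time $R_T$ evolve as independent BBMs, so by the definition \eqref{eqn: u} of $u_{f,\delta}$,
\begin{equation}
\label{eqn: factor_prod_plan}
\E\left[\prod_{w \in \Sigma(t)} e^{-f_\delta(x_w(t)-m(t))} \,\Big|\, \F_{R_T}\right] = \prod_{v\in \Sigma(R_T)} \Big(1 - u_{f,\delta}\big(t-R_T,\; m(t)-x_v(R_T)\big)\Big).
\end{equation}
Set $X_v(t) = m(t) - m(t-R_T) - x_v(R_T)$, so that the argument of $u_{f,\delta}$ equals $X_v(t) + m(t-R_T)$. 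Because $R_T = o(\sqrt{T})$ while $t \geq \varepsilon T$, a direct computation using \eqref{eqn: recentering} gives $m(t) - m(t-R_T) = \sqrt{2}\,R_T + o(1)$ uniformly in $t\in[\varepsilon T,T]$, so $X_v(t) = \sqrt{2}\,R_T - x_v(R_T) + o(1)$ for each $v$.

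Next I would invoke the sharp two-sided estimate of Lemma \ref{lem: fundamental_tail} with a \emph{fixed} auxiliary scale $r$, applied to those $v$ with $x_v(R_T) \leq \sqrt{2}\,R_T - K$ for some large but fixed $K$. For such $v$, $X_v(t)\geq K$ eventually dominates $8r - \tfrac{3}{2\sqrt{2}}\log(t-R_T)$, and $X_v(t) = O(R_T) = o(\sqrt{t - R_T})$. Summing, and using $X_v(t)\,e^{-\sqrt{2}X_v(t)} = (\sqrt{2}\,R_T - x_v(R_T))\,e^{-\sqrt{2}(\sqrt{2}\,R_T - x_v(R_T))}(1+o(1))$, one recognises the derivative martingale:
\begin{equation}
\sum_v u_{f,\delta}\big(t-R_T,\; m(t)-x_v(R_T)\big) = \gamma(r)^{\pm 1}(1+o(1))\,C_r(f,\delta)\, Z(R_T) + o(1).
\end{equation}
The contribution of the exceptional particles with $x_v(R_T) > \sqrt{2}\,R_T - K$ is absorbed into a constant multiple of $Y(R_T)$, which vanishes almost surely by \cite{lalley_sellke}, while particles with very negative $x_v(R_T)$ are controlled by the exponentially small weight $e^{-\sqrt{2}X_v(t)}$.

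Since every factor in \eqref{eqn: factor_prod_plan} lies in $[0,1]$ and $\max_v u_v \to 0$, the estimate $|\log(1-u_v) + u_v| \leq u_v^2$ lets us replace the product by $\exp(-\sum_v u_v)$ up to a vanishing error. Combined with $Z(R_T) \to Z(\omega)$ almost surely from \cite{lalley_sellke}, this yields, for each fixed $r$,
\begin{equation}
e^{-\gamma(r)\,C_r(f,\delta)\,Z(\omega)} \leq \liminf_{T\to\infty} A_T \leq \limsup_{T\to\infty} A_T \leq e^{-\gamma(r)^{-1}\,C_r(f,\delta)\,Z(\omega)} \qquad \text{a.s.,}
\end{equation}
where $A_T$ denotes the left-hand side of \eqref{eqn: factor_prod_plan}. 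Sending the auxiliary scale $r \to \infty$, so that $\gamma(r) \downarrow 1$ and $C_r(f,\delta) \to C(f,\delta)$, closes the proof.

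The principal obstacle is the bookkeeping for the exceptional particles near the tip at time $R_T$, where Lemma \ref{lem: fundamental_tail} does not apply: one must simultaneously control their count (via the a.s.\ vanishing of $Y$) and verify that excising them distorts $Z(R_T)$ only by a quantity that disappears when $K$ is sent to infinity after $T \to \infty$. A secondary delicate point is the uniformity in $t \in [\varepsilon T, T]$ of the $o(1)$ corrections from the expansion of $m(t) - m(t-R_T)$, which is exactly what the hypothesis $R_T = o(\sqrt{T})$ is tailored to deliver.
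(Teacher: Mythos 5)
Your proposal is correct and follows essentially the same route as the paper: condition on $\F_{R_T}$, factorize via the branching property, apply the two-sided bound of Lemma \ref{lem: fundamental_tail} at a fixed auxiliary scale $r$, identify the derivative martingale, and take $T\to\infty$ followed by $r\to\infty$. The only point where the paper is cleaner is your ``principal obstacle'' of exceptional particles near the tip: since $\min_{j\leq n(R_T)}\big(\sqrt{2}R_T - x_j(R_T)\big)\to+\infty$ almost surely (the Lalley--Sellke fact behind $Y(t)\to 0$), for $T$ large \emph{every} particle at time $R_T$ satisfies the hypotheses of Lemma \ref{lem: fundamental_tail}, so no cutoff $K$ and no separate bookkeeping are required.
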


\begin{rem}
The right side of \eqref{eqn: bond identity} equals the right side of \eqref{eqn: ergodic} in Theorem \ref{thm: ergodic}.
The connection is through \eqref{eqn: C cluster}.
\end{rem}

\begin{proof}
This is an application of Lemma \ref{lem: fundamental_tail} and the convergence of the derivative martingale.
Enumerate the particles at time $R_T$ by $i=1,\dots, n(R_T)$, and write $x_i(t)$ for the position of the particular $i$.
For a particle $v\in\Sigma(t)$ at time $s$, define $i_v$ to be the index of the ancestor of $v$ at time $R_T$, and $x^{(i_v)}_v(t,R_T) \equiv x_v(t)- x_{i_v}(R_T)$. 
By the Markov property of BBM, conditionally on $\F_{R_T}$, the processes $\big(x^{(i)}_v(t,R_T), v \text{ such that } i_v=i\big)$, $i=1,\dots, n(R_T)$  are independent and distributed 
as the particles of a BBM at time $t-R_T$. We have
\begin{equation}
\label{eqn: product}
\exp \left( -\int f_\delta(y) ~\mathcal E_{t ,\omega}(\dd y)\right)
=\prod_{i=1}^{n(R_T)} \exp \left( -\sum_{{v\in \Sigma(t)}\atop{ i_v=i}}f_\delta(-y_i(R_T) + x^{(i)}_v(t,R_T)-m(t-R_T) )\right)
\end{equation}
where
\beq
y_i(R_T)\equiv\sqrt{2} R_T - x_i(R_T) + \frac{3}{2\sqrt{2}} \log \left(\frac{t-R_T}{t}\right)=\sqrt{2} R_T - x_i(R_T) +o(1),
\eeq
and  $o(1)\downarrow 0$ as $T\uparrow \infty$.
 Using the independence of the $x^{(i)}$'s and the definition \eqref{eqn: conv1}, the conditional expectation of \eqref{eqn: product} given $\F_{R_T}$ can be written as
\beq
\exp \left(\sum_{j \leq n(R_T)} \log \big( 1-u_{f,\delta}(t-R_T,  y_{j}(R_T) +m(t-R_T) \big) \right).
\eeq
Note that the convergence of the martingales defined in 
\eqref{defi_martingale} implies that 
\beq
\lim_{T\uparrow \infty}\min_{j\leq n(R_T)}y_j(R_T)=+\infty.
\eeq
Since for $0<u<1/2$, one has $-u-u^2\leq \log(1-u)\leq -u$,  one can pick $T$ large enough so that
\beq
\E \left[ \exp \left( -\int f_\delta(y) ~\mathcal E_{t ,\omega}(\dd y)\right) 
\Big| \F_{R_T}\right] \leq \exp \left(- \sum_{j \leq n(R_T)} u_{f,\delta}(t-R_T,  y_{j}(R_T) +m(t-R_T)) \right)
\eeq
and
\begin{eqnarray} \nonumber
&&\hspace{-5mm}\E \left[  \exp \left( -\int f_\delta(y) ~\mathcal E_{t ,\omega}(\dd y)\right)
 \Big| \F_{R_T}\right] \geq \exp \left(-\sum_{j \leq n(R_T)} u_{f,\delta}(t-R_T,  y_{j}(R_T) +m(t-R_T))\right) \\
 &&\hspace{4cm}\times \exp \left(-\sum_{j \leq n(R_T)} \big(u_{f,\delta}(t-R_T,  y_{j}(R_T) +m(t-R_T))\big)^2\right).
\end{eqnarray}
To finish the proof, it suffices to show that
\begin{eqnarray}
\lim_{T\to\infty}  \sum_{j \leq n(R_T)} u_{f,\delta}(t-R_T,  y_{j}(R_T) +m(t-R_T)) &=& C(f) Z(\omega)\ , \label{eqn: show1}\\
\lim_{T\to\infty}  \sum_{j \leq n(R_T)} \big(u_{f,\delta}(t-R_T,  y_{j}(R_T) +m(t-R_T))\big)^2&=&0\ . \label{eqn: show2}
\end{eqnarray}
We claim that $y_j(R_T)=o(\sqrt{t})$, uniformly in $j$, so that Lemma \ref{lem: fundamental_tail} can be applied.
Indeed, since $\lim_{T\to\infty} \frac{ \max_{j\leq n(R_T)}x_j(R_T)}{R_T}=\sqrt{2}$, $\PP$-a.s., we have
\beq
\label{eqn: LLN max}
\frac{y_{j}(R_T)}{t^{1/2}} \to 0 \text{ as $T\to\infty$ $\PP$-a.s., uniformly in $j\leq n(R_T)$.}
\eeq
Equation \eqref{eqn: show1}  follows by picking a fixed $r$ in Lemma \ref{lem: fundamental_tail}, taking first 
$T\to\infty$,  then $r\to\infty$, and using the  convergence of the derivative martingale.
Lemma \ref{lem: fundamental_tail} is also used to establish \eqref{eqn: show2}. 
By fixing $r$, the proof is reduced to show that  $ \sum_{j \leq n(R_T)} y_j(R_T)^2e^{-2\sqrt{2} y_j(R_T)}$ goes to zero. 
This is clear since this sum is bounded above by
\beq 
\max_{j\leq n(R_T)}\left( y_j(R_T)^2 e^{-\sqrt{2}y_j(R_T)}\right) \times \sum_{j\leq n(R_T)} e^{-\sqrt{2}y_j(R_T)}
\eeq
and both terms tend to zero almost surely as $T\uparrow\infty$.
\end{proof}


\subsection{Proof of Theorem \ref{thm: overlaps}}
\label{sect: branching}

Throughout the proof, $C$ and $\kappa$ will denote generic constants that do not depend on $t,t'$, and $r$
and that are not necessarily the same at different occurrences.
We recall the result on the localization of the paths of extremal particles established in 
\cite{abk_genealogies}.
Let $t>0$ and $\gamma>0$ and define  
\beq \label{eqn: F}
\begin{aligned}
f_{\gamma,t}(s) 
&\defi \begin{cases}
                    s^\gamma & 0\leq s \leq t/2, \\
		    (t-s)^\gamma & t/2\leq s \leq t. 
                   \end{cases}
                   \\
F_{\alpha,t} (s) 
&\defi \frac{s}{t} m(t) - f_{\alpha,t}(s), \quad 0\leq s\leq t.
\end{aligned}
\eeq
Fix $0 <\alpha<  1/2 < \beta < 1$. By definition, $F_{\beta,t} (s) < F_{\alpha,t} (s)$,  $F_{\beta,t} (0) = F_{\alpha,t}(0) =0$, and $F_{\beta,t} (t) = F_{\alpha,t}(t) =m(t)$.

The following proposition gives strong bounds to the probability of finding
 particles that are close to the level of the maximum at given times but whose paths are 
 bounded by $F_{\beta,t}$ and $F_{\alpha,t}$. (It was stated as Proposition 6 in \cite{abk_ergodic}).
It follows directly from the bounds derived in the course of the proof
of \cite[Corollary 2.6]{abk_genealogies}, cf. equations (5.5), (5.54), (5.62) and (5.63).
\begin{prop}
\label{prop: loc}
Let $I$ be a compact interval. There exist $C>0$ and $\kappa>0$ (depending on 
$\alpha, \beta$ and $I$) such that
\beq \label{tube} 
 \sup_{t\geq 3r} 
\PP\big[ \exists v\in \Sigma_I(t):  x_v(s)\geq  F_{\alpha,t}(s) \text { or } x_v(s)\leq F_{\beta,t}(s)  \text{ for some } s\in (r,t-r) \big] \leq Ce^{r^\kappa} .
\eeq
\end{prop}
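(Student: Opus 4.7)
The plan is to prove this tube estimate by a first-moment computation combined with Brownian bridge estimates, following the strategy of Section~5 of \cite{abk_genealogies}. Writing $N_{t,r}$ for the number of particles $v\in\Sigma_I(t)$ whose path exits the tube $[F_{\be,t}(s),F_{\al,t}(s)]$ at some $s\in(r,t-r)$, Markov's inequality and the many-to-one lemma for BBM reduce the claim to bounding
\begin{equation}
\E[N_{t,r}] \;=\; e^t\,\PP\bigl[B_t-m(t)\in I,\ \exists s\in(r,t-r):\ B_s\notin[F_{\be,t}(s),F_{\al,t}(s)]\bigr],
\end{equation}
where $B$ is a standard Brownian motion starting at $0$.

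Next I would condition on the endpoint $B_t=m(t)+y$ with $y\in I$. Under this conditioning $B$ is a Brownian bridge, so after subtracting the linear interpolation $(s/t)(m(t)+y)$ one obtains a standard bridge $\hat B$ with variance $s(t-s)/t$ whose endpoints vanish. The tube-exit events then become one-sided excursion events for $\hat B$ against curves of magnitude $f_{\al,t}(s)+O(1)$ on one side and $f_{\be,t}(s)+O(1)$ on the other, the $O(1)$ absorbing the bounded shift coming from $y\in I$. The exit probability splits accordingly into two contributions, each treated separately and then summed via a dyadic discretization of $s\in(r,t-r)$.

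For the $\al$-side one uses a ballot/reflection estimate for the excursion of a Brownian bridge across a near-straight barrier on scale $s$, which at each dyadic scale yields a factor of order $s^{\al-1/2}$; combining with the Gaussian density of $B_t$ at $m(t)+y$ and the factor $e^t$, and summing the geometric series in $s$, produces a contribution $C e^{-c r^{2\al}}$. For the $\be$-side, since $\be>1/2$ and the bridge variance at time $s$ is $s(t-s)/t\le s$, a direct Gaussian tail bound gives probability of order $\exp(-c\,s^{2\be-1})$ per scale (and symmetrically for $s\ge t/2$ with $s$ replaced by $t-s$), whose dyadic sum is $C e^{-c r^{2\be-1}}$. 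Setting $\kappa$ to any positive constant strictly less than $\min(2\al,2\be-1)$ then absorbs the remaining polynomial-in-$t$ factors and yields the stated bound uniformly for $t\ge 3r$.

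The main obstacle is arranging the cancellation between the factor $e^t$ and the Gaussian density $\sim t\,e^{-t}$ of $B_t$ at $m(t)+y$ (uniformly in $y\in I$): this cancellation is what makes the expected size $\E|\Sigma_I(t)|$ only polynomial in $t$, and any decay in the tube-exit probability must survive on top of it. Keeping the bookkeeping uniform in $t$, in particular controlling the degenerate bridge variance $s(t-s)/t$ near the endpoints $s=r$ and $s=t-r$, and absorbing the $(\log t)$ correction appearing in $m(t)$ into the constants, is the technically delicate part; the complete accounting is carried out in equations (5.5), (5.54), (5.62), (5.63) of \cite{abk_genealogies}, which the proposition quotes verbatim.
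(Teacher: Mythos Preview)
The paper does not actually prove this proposition: it records, in the sentence preceding it, that the bound is read off from equations (5.5), (5.54), (5.62) and (5.63) in the proof of \cite[Corollary~2.6]{abk_genealogies}, and your proposal ends on the identical citation. The sketch you offer beyond that, however, has a genuine gap on the $\alpha$-side. After subtracting the linear interpolation to $m(t)+y$, the upper boundary of the tube lies at $-f_{\alpha,t}(s)+O(1)$, which is \emph{negative}; since an unconditioned Brownian bridge from $0$ to $0$ fluctuates on scale $\sqrt{s(t-s)/t}\sim\sqrt{s}$ at time $s\le t/2$ and $s^\alpha=o(\sqrt{s})$ for $\alpha<1/2$, the event $\{\hat B_s\ge -s^\alpha+O(1)\}$ has probability at least $1/2$ at every single $s$, and the crossing probability somewhere on $(r,t-r)$ is essentially one. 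No ballot/reflection estimate on the \emph{unconditioned} bridge can extract a factor decaying in $r$ here, and summing your per-scale factor $s^{\alpha-1/2}$ over dyadic scales from $r$ yields $r^{\alpha-1/2}$, not the stretched exponential $e^{-cr^{2\alpha}}$ you claim. The same uncancelled factor of order $t$ that you flag in your last paragraph also prevents the $\beta$-side bound $t\cdot e^{-cr^{2\beta-1}}$ from being uniform over $t\ge 3r$.

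What the argument in \cite{abk_genealogies} supplies, and what your outline omits, is a preliminary restriction to the high-probability event that all paths stay below a rough upper envelope near the line $s\mapsto\frac{s}{t}m(t)$. The ballot-type factor coming from this constraint is precisely what kills the polynomial-in-$t$ blow-up of the first moment and repels the (now conditioned) bridge away from the line, so that approaching within $s^\alpha$ of it becomes a rare event with the required stretched-exponential cost. Without this step the first-moment computation does not close uniformly in $t$.
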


We now prove Theorem \ref{thm: overlaps}.
Fix $\alpha$ and $\beta$ as in Proposition \ref{prop: loc}.
Define the set of extremal particles in $I$ at time $t$ that are localized in the interval $(r_1,t-r_1)$
\beq
\Sigma^{loc}_I(t)=\{v\in \Sigma(t): x_v(t)\in I+m(t),  F_{\beta,t}(s) \leq x_v(s)\leq  F_{\alpha,t}(s) ~ \forall s\in (r_1,t-r_1)\}.
\eeq
The parameter $r_1=r_1( r )$ is chosen to be smaller than $r$ (so that $ (r_1,t-r_1)\subset  (r,t-r)$).
The precise choice will be given below.
By Proposition \ref{prop: loc}, to prove Theorem \ref{thm: overlaps}, it suffices to show
\begin{equation}
\label{eqn: to show}
\sup_{\substack{t>3r\\ t'>t+r}}\PP\left(\exists v \in \Sigma^{loc}_I(t), v'\in \Sigma^{loc}_I(t'): Q(v,v')\in [r,t] \right)\leq C e^{-r^\kappa}
\end{equation}
By Markov's inequality the probability is smaller than
\beq
\E\left[ \#\{(v,v'):  v \in \Sigma^{loc}_I(t), v'\in \Sigma^{loc}_I(t'): Q(v,v')\in [r,t]\}\right].
\eeq
This expectation can be evaluated using Sawyer's formula \cite{sawyer}, see also \cite{bramson}. 
Write $x$ for a standard Brownian motion on $\R$, $\mu_s$ for the standard Gaussian measure of variance $s$.
Let also $\Xi^{t_1,t_2}$ be the set of continuous path on $[0,\infty]$ that satisfy $F_{\beta,t}(s) \leq x_v(s)\leq  F_{\alpha,t}(s) ~ \forall s\in (t_1,t_2)$.
The expectation equals
\begin{equation}
\label{eqn: sawyer}
K e^t \int_r^{t} e^{t'-s} \dd s \int_{\R}\mu_s(\dd y) \PP\left(x\in \Xi^{r,t-r_1} \big| x(s)=y\right)\PP\left(x\in \Xi^{s,t'-r_1} \big| x(s)=y\right)
\end{equation}
where $K=\sum_{k=1}^\infty k(k-1)p_k$.
The second probability is estimated exactly the same way as in \cite{abk_genealogies} using Brownian bridge estimates (from equation (4.5) to (4.14)). 
One needs to pick $r_1( r)< r/2$ and $r_1( r) < r^{1-\beta}$.
The result is 
\beq
\PP\left(x\in \Xi^{s,t'-r_1} \big| x(s)=y\right)\leq C r^{1/2} \frac{e^{-(t'-s)} e^{\frac{3}{2}\frac{t'-s}{t'}\log t'} F_{\beta,t'} (s) e^{-\sqrt{2} F_{\alpha,t'}(s)}}{(t'-s)^{3/2}}
\eeq
Since this bound is uniform in $y$, \eqref{eqn: sawyer} is smaller than 
\begin{equation}
\label{eqn: sawyer 2}
C e^t \PP\left(x\in \Xi^{r_1,t-r_1}\right)~r^{1/2}  \int_r^{t'-r}   \frac{ e^{\frac{3}{2}\frac{t'-s}{t'}\log t'} F_{\beta,t'} (s) e^{-\sqrt{2} F_{\alpha,t'}(s)}}{(t' - s)^{3/2}} ~ \dd s .
\end{equation}
Note that the integral is now on $[r,t'-r]\supset[r,t]$. 
The term $e^t \PP\left(x\in \Xi^{r_1,t-r_1}\right)$ is of order of $r_1>r$ (cf. equation (4.17) in \cite{abk_genealogies}). 
It remains to estimate the integral.
The domain of integration is split into $[r, t'/2]$, $[t'/2,t'-t'^{\nu}]$, and $[t'-t'^{\nu},t'-r]$, for a fixed $\nu>0$.
On the first interval, one has that \eqref{eqn: sawyer 2} is smaller than
\begin{equation}
\label{eqn: bound1}
C r^{3/2}  \int_r^{t'/2}  \frac{ e^{\frac{3}{2}\frac{t'-s}{t'}\log t'} s^\beta  e^{-\sqrt{2} s^\alpha}}{(t'-s)^{3/2}} \dd s \leq 
C r^{3/2}  \int_r^{\infty}   s^\beta  e^{-\sqrt{2} s^\alpha} \dd s \leq C r^{3/2}e^{-r^\alpha}. 
\end{equation}
On the second interval, using the change of variable $s \to t' - s$, one gets the upper bound
\begin{equation}
\label{eqn: bound2}
C r^{3/2}  \int_{t'^{\nu}}^{t'/2}  \frac{ e^{\frac{3}{2}\frac{s}{t'}\log t'} s^\beta  e^{-\sqrt{2} s^\alpha}}{s^{3/2}} \dd s \leq 
C r^{3/2} {t'}^{3/4} \int_{t'^\nu}^{t'/2}    s^\beta  e^{-\sqrt{2} s^\alpha} \dd s \leq C r^{3/2} t'^{7/4 }e^{-t'^{\nu\alpha}}. 
\end{equation}
Note that for $t'>t>3r$, $\kappa$ can be chosen for the bound to be of the desired form.
Finally on $[t'-t'^\nu,t'-r]$, the upper bound is 
\begin{equation}
\label{eqn: bound3}
C r^{3/2}  \int_{r}^{t'^\nu}  \frac{ e^{\frac{3}{2}\frac{s}{t'}\log t'} s^\beta  e^{-\sqrt{2} s^\alpha}}{s^{3/2}} \dd s \leq 
C r^{3/2}  \int_{r}^{\infty}    s^\beta  e^{-\sqrt{2} s^\alpha} \dd s \leq C r^{3/2} e^{-r^{\alpha}}. 
\end{equation}
The constants $C$ and $\kappa$ can be picked such that the bounds \eqref{eqn: bound1}, \eqref{eqn: bound2}, and \eqref{eqn: bound3}
have the desired form. This completes the proof of Theorem \ref{thm: overlaps}.


\subsection{The law of large numbers}
\label{sect: LLN}

In this section, we prove that the term \eqref{eqn: decomp 3} goes to zero as $T$ goes to infinity:
\begin{prop}
\label{prop: LLN}
Let $f\in \mathcal C_c(\R)$, $R_T=o(\sqrt{T})$ with $\lim_{T\to\infty}R_T=+\infty$, and $\varepsilon>0$. Consider $Y_t(\omega)$ defined in \eqref{eqn: Y}.
Then 
\begin{equation}
\label{eqn: LLN}
\lim_{T\uparrow\infty}\frac{1}{T}\int_{\varepsilon T}^T Y_t(\omega)  \dd t =0,
\quad
 \PP-a.s.
\end{equation}
\end{prop}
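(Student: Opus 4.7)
The plan is to prove almost-sure convergence of $A_T:=\frac{1}{T}\int_{\vare T}^T Y_t(\omega)\,\dd t$ to $0$ via a second-moment estimate followed by a Borel--Cantelli argument along a subsequence. Since $f\ge 0$, $X_t:=\exp(-\int f\,\dd\mathcal E_{t,\omega})\in (0,1]$, so $|Y_t|\le 2$, and by construction $\E[Y_t\,|\,\FF_{R_T}]=0$. Hence
\begin{equation*}
\E[Y_sY_t] \;=\; \E\big[\,\E[X_sX_t\,|\,\FF_{R_T}] - \E[X_s\,|\,\FF_{R_T}]\,\E[X_t\,|\,\FF_{R_T}]\,\big].
\end{equation*}
I write $\E[A_T^2]=T^{-2}\iint_{[\vare T,T]^2} \E[Y_sY_t]\,\dd s\,\dd t$ and split the domain into the diagonal strip $\{|s-t|\le R_T\}$, which contributes at most $8R_T/T=o(1)$ since $R_T=o(\sqrt T)$, and its off-diagonal complement.

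For the off-diagonal piece, take $(s,t)$ with $s<t$, $t-s>R_T$, and $T$ large enough that $\vare T>3R_T$, and decompose via the branching property at time $R_T$. Enumerating the particles at $R_T$ by $i$ and setting $X_u^{(i)}:=\exp(-\sum_{v:\,i_v=i}f(x_v(u)-m(u)))$, the factors $X_u^{(i)}$ are conditionally independent across $i$ given $\FF_{R_T}$ with $X_u=\prod_i X_u^{(i)}$. The elementary telescoping inequality $|\prod a_i-\prod b_i|\le\sum_i|a_i-b_i|$ for factors in $[0,1]$ yields
\begin{equation*}
|\E[Y_sY_t\,|\,\FF_{R_T}]|\;\le\;\sum_i \mathrm{Cov}(X_s^{(i)},X_t^{(i)}\,|\,\FF_{R_T}),
\end{equation*}
each covariance being non-negative by the positive association of BBM within a single subtree. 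Setting $\chi_u^{(i)}:=1-X_u^{(i)}$, one has $\mathrm{Cov}(X_s^{(i)},X_t^{(i)}\,|\,\FF_{R_T})=\mathrm{Cov}(\chi_s^{(i)},\chi_t^{(i)}\,|\,\FF_{R_T})\le \E[\chi_s^{(i)}\chi_t^{(i)}\,|\,\FF_{R_T}]$, and since $\chi_u^{(i)}\le \1_{B_i(u)\neq\emptyset}$ with $B_i(u):=\{v\in\Sigma_J(u):i_v=i\}$ and $J\supset\mathrm{supp}(f)$, summing over $i$ and taking total expectation gives
\begin{equation*}
|\E[Y_sY_t]|\;\le\;\E\bigl[\#\{(v,v')\in\Sigma_J(s)\times\Sigma_J(t):Q(v,v')\ge R_T\}\bigr]\;\le\; Ce^{-R_T^\kappa},
\end{equation*}
the last step being exactly the first-moment estimate carried out in the proof of Theorem \ref{thm: overlaps} with $r=R_T$. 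Combining the two pieces, $\E[A_T^2]\le 8R_T/T+Ce^{-R_T^\kappa}\to 0$.

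To upgrade to almost-sure convergence, pick a subsequence $T_n\uparrow\infty$ along which $\sum_n\E[A_{T_n}^2]<\infty$ and additionally $T_{n+1}/T_n\to 1$ fast enough; both are achievable because $R_T\to\infty$ and $R_T/T\to 0$. Borel--Cantelli then gives $A_{T_n}\to 0$ almost surely, while the uniform bound $|Y_t|\le 2$ yields $\sup_{T\in[T_n,T_{n+1}]}|A_T-A_{T_n}|=O((T_{n+1}-T_n)/T_n)\to 0$, completing the argument. The main obstacle is the off-diagonal decorrelation estimate: Theorem \ref{thm: overlaps} is exactly tailored to control pairs of extremal particles sharing a recent common ancestor, and the subtree product decomposition is what converts this overlap count into a bound on the conditional covariance; without it the $\FF_{R_T}$-centering in the definition of $Y_t$ alone gives no useful decay of $\E[Y_sY_t]$.
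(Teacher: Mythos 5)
Your overall architecture (second moment of $\frac1T\int_{\vare T}^T Y_t\,\dd t$, split of the double integral at $|s-t|=R_T$, conditional independence of the subtrees rooted at time $R_T$, telescoping of the product over those subtrees) matches the paper's, which runs the same reduction and then closes via Lyons' strong law rather than your Borel--Cantelli-along-a-subsequence step. The problem is the final decorrelation estimate, where your proof has a genuine gap. You bound $|\E[Y_sY_t]|$ by $\E\bigl[\#\{(v,v')\in\Sigma_J(s)\times\Sigma_J(t):Q(v,v')\ge R_T\}\bigr]$ and assert that this is at most $Ce^{-R_T^\kappa}$ "by the first-moment estimate in the proof of Theorem \ref{thm: overlaps}." That estimate is \emph{not} a bound on this expectation: the proof of Theorem \ref{thm: overlaps} computes the first moment only after replacing $\Sigma_I$ by $\Sigma_I^{loc}$, i.e.\ after restricting to particles whose whole path lies between $F_{\beta,t}$ and $F_{\alpha,t}$; the non-localized particles are removed by Proposition \ref{prop: loc}, which is a \emph{probability} bound, and Markov's inequality only converts expectations into probabilities, not the reverse. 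The unrestricted expectation you need is in fact large: already $\E[\#\Sigma_I(t)]$ grows like $t$, and a Sawyer-formula computation shows that pairs whose common ancestor sits near $m(s)$ at a time just before $s$ contribute on the order of $T/\sqrt{R_T}$ to your count. So the quantity you invoke diverges rather than decaying like $e^{-R_T^\kappa}$. The paper avoids this precisely by first intersecting with the localization events $\mathcal A_{s,T},\mathcal A_{s',T}$ (paying $\PP(\mathcal C_{s,s',T})\le Ce^{-R_T^\kappa}$ on the complement, using $|Y_sY_t|\le 4$), which forces the relevant ancestors $i$ at time $R_T$ to satisfy $R_T^\alpha\lesssim \sqrt2R_T-x_i(R_T)\lesssim R_T^\beta$; only with that restriction does the Gaussian computation of $\E[\sum_i u_i(s)u_i(s')]$ yield the needed $R_T^{2\beta}e^{-\sqrt2R_T^\alpha}$ decay. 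Your argument as written skips exactly this step, and even the weaker bound $O(1/\sqrt{R_T})$ one could salvage for $\sum_i\E[\chi_s^{(i)}\chi_t^{(i)}]$ (using Bramson-type tail estimates, which you do not invoke) is too slow to make your subsequence Borel--Cantelli argument work for a general $R_T=o(\sqrt T)$ tending to infinity.

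Two smaller points. First, the claim that $\mathrm{Cov}(X_s^{(i)},X_t^{(i)}\mid\FF_{R_T})\ge 0$ by "positive association of BBM" is not justified and is not a standard fact in this form; fortunately it is also unnecessary, since $|\mathrm{Cov}(\chi_s^{(i)},\chi_t^{(i)})|\le \E[\chi_s^{(i)}\chi_t^{(i)}]+\E[\chi_s^{(i)}]\E[\chi_t^{(i)}]$ costs only a factor of $2$. Second, in the interpolation $\sup_{T\in[T_n,T_{n+1}]}|A_T-A_{T_n}|\to0$ you should note that $Y_t$ itself depends on $T$ through the conditioning $\sigma$-algebra $\FF_{R_T}$, so the two averages are not integrals of the same process; this requires an extra (routine but nonzero) argument, which is one reason the paper prefers to quote Lyons' theorem.
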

\begin{proof}
The proof is based on a Theorem of Lyons \cite{lyons} that we cite from 
 \cite{abk_ergodic}:
\begin{thm}[\cite{lyons}, Theorem 8 in \cite{abk_ergodic}]
\label{lyons_int}
Consider a process $\{Y_s\}_{s\in \R_+}$ such that $\E[Y_s]=0$ for all $s$. Assume furthermore that the random variables are 
uniformly bounded, say $\sup_s |Y_s|\leq 2$ almost surely. If
\beq \label{sum_corr}
\sum_{T=1}^\infty \frac{1}{T} \E\left[\Big| \frac{1}{T} \int_0^T Y_s  
\dd s \Big|^2\right]<\infty, 
\eeq
then
\beq
 \frac{1}{T} \int_0^T Y_s ~ ds \to 0, \text{ a.s.}
\eeq
\end{thm}
A straightforward decomposition gives
\begin{eqnarray}\nonumber
\sum_{T=1}^\infty \frac{1}{T} \E\left[\Big| \frac{1}{T} \int_{\varepsilon T}^T Y_s ~\dd s \Big|^2\right]
&=& \sum_{T=1}^\infty \frac{1}{T^3}\int_{\varepsilon T}^T\int_{\varepsilon T}^T \E[Y_sY_{s'}] ~ \dd s \dd s' \\\nonumber
&=& \sum_{T=1}^\infty \frac{1}{T^3}\int_{\substack{s,s'\in[\vare T,T] \\ |s-s'|\leq R_T}} \E[Y_sY_{s'}] ~ \dd s \dd s'\\ &+&  
\sum_{T=1}^\infty \frac{1}{T^3}\int_{\substack{s,s'\in[\vare T,T] \\ |s-s'|\geq R_T}} \E[Y_sY_{s'}] ~ \dd s \dd s'. 
\end{eqnarray}
Since $0\leq Y_s \leq 2$ for any $s$, the first term is smaller than a constant times $\sum_{T=1}^\infty \frac{R_T}{T^2}$, which is summable for $R_T=o(\sqrt{T})$. 
Therefore, to prove Proposition \ref{prop: LLN} using Theorem \ref{lyons_int}, it remains to show that the second term is summable. 
This is done in the following lemma.
\end{proof}

\begin{lem}
Let $Y_s$ be as in \eqref{eqn: Y}. Then for $R_T=o(\sqrt{T})$ with $\lim_{T\to\infty}R_T=+\infty$,
\beq
\E[Y_sY_s'] \leq C e^{-R_T^\kappa} \text{ for any $s,s'\in [\varepsilon T,T]$ with $|s-s'|\geq R_T$. }
\eeq
\end{lem}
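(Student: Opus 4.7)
Assume without loss of generality $s<s'$, so $s'-s\geq R_T$. Since $\E[Y_t\mid\F_{R_T}]=0$ by construction,
\begin{equation*}
\E[Y_sY_{s'}] \;=\; \E\bigl[\,\text{Cov}(Z_s,Z_{s'}\mid\F_{R_T})\,\bigr], \qquad Z_t:=\exp\!\Big(-\!\int f(y)\,\mathcal E_{t,\omega}(\dd y)\Big),
\end{equation*}
and the task reduces to showing that this averaged conditional covariance decays like $e^{-R_T^\kappa}$.

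The plan is to exploit the Markov decomposition at time $R_T$: conditionally on $\F_{R_T}$, the continuation of the BBM is a union of $n(R_T)$ independent sub-BBMs $\Pi_i$ rooted at the particles $i=1,\dots,n(R_T)$, so grouping particles at time $t$ by their ancestor at $R_T$ gives $Z_t=\prod_i Z_t^{(i)}$ with the pairs $\{(Z_s^{(i)},Z_{s'}^{(i)})\}_i$ independent across $i$ given $\F_{R_T}$. All factors lie in $[0,1]$, so a telescoping identity yields
\begin{equation*}
\bigl|\text{Cov}(Z_s,Z_{s'}\mid\F_{R_T})\bigr| \;\leq\; \sum_i \bigl|\text{Cov}(Z_s^{(i)},Z_{s'}^{(i)}\mid\F_{R_T})\bigr|.
\end{equation*}
Before estimating each summand I would localise: pick a compact window $I=[a,L]$ with $L=R_T^\eta$ for small $\eta>0$, and replace $Z_t$ by its restriction $Z_t^I$ to the particles in $\Sigma_I(t)$. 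Bramson's classical tail bound $\PP(\max_v x_v(t)-m(t)>L)\leq CLe^{-\sqrt{2}L}$, uniform in $t$, together with Cauchy--Schwarz controls the truncation error in $\E[Y_sY_{s'}]$ by $C\exp(-cR_T^\eta)$, which is already of the required form.

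With this localisation in place, set $\chi_i^t:=\1\{\Pi_i\text{ has a descendant in }\Sigma_I(t)\}$. Since $Z_t^{I,(i)}=1$ whenever $\chi_i^t=0$, one has $1-Z_t^{I,(i)}\leq\chi_i^t$; rewriting the conditional covariance as $\text{Cov}(1-Z_s^{I,(i)},1-Z_{s'}^{I,(i)}\mid\F_{R_T})$ and using the elementary bound $|\text{Cov}(U,V)|\leq\E[UV]+\E U\,\E V$ for $U,V\in[0,1]$ gives
\begin{equation*}
\bigl|\text{Cov}(Z_s^{I,(i)},Z_{s'}^{I,(i)}\mid\F_{R_T})\bigr| \leq \E[\chi_i^s\chi_i^{s'}\mid\F_{R_T}] + \E[\chi_i^s\mid\F_{R_T}]\,\E[\chi_i^{s'}\mid\F_{R_T}].
\end{equation*}
Summing over $i$ and integrating, the first term yields $\E[N_{s,s'}]$, where $N_{s,s'}:=\sum_i\chi_i^s\chi_i^{s'}$ counts ancestors at time $R_T$ with descendants in \emph{both} $\Sigma_I(s)$ and $\Sigma_I(s')$. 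Each such ancestor produces a pair $(v,v')\in\Sigma_I(s)\times\Sigma_I(s')$ with branching time $Q(v,v')\geq R_T$, so $\E[N_{s,s'}]$ is dominated by the first-moment quantity controlled (via Sawyer's formula) in the proof of Theorem \ref{thm: overlaps}, yielding $\E[N_{s,s'}]\leq Ce^{-R_T^\kappa}$.

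The main obstacle is the ``marginal product'' term $\sum_i\E[\chi_i^s\mid\F_{R_T}]\E[\chi_i^{s'}\mid\F_{R_T}]$, which Theorem \ref{thm: overlaps} does not control directly. The plan there is an FKG-type positive-correlation argument: both $\chi_i^s$ and $\chi_i^{s'}$ are indicators of increasing functionals of the path-heights inside the single sub-BBM $\Pi_i$ (``some descendant enters $I+m(t)$''), so $\E[\chi_i^s\mid\F_{R_T}]\E[\chi_i^{s'}\mid\F_{R_T}]\leq\E[\chi_i^s\chi_i^{s'}\mid\F_{R_T}]$ and the term is again bounded by $\E[N_{s,s'}]\leq Ce^{-R_T^\kappa}$. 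A fallback avoiding FKG is to apply Lemma \ref{lem: fundamental_tail} to get $\E[\chi_i^t\mid\F_{R_T}]\leq Cy_i e^{-\sqrt{2}y_i}$ with $y_i=\sqrt{2}R_T-x_i(R_T)$, and then bound the sum by $\max_i(y_ie^{-\sqrt{2}y_i})\cdot Y(R_T)$ exactly as in the proof of \eqref{eqn: show2}, now with a quantitative rate in $R_T$.
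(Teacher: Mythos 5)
Your overall architecture -- condition on $\F_{R_T}$, factorize $Z_t=\prod_i Z_t^{(i)}$ over the ancestors at time $R_T$, and telescope the covariance of the products into a sum of per-ancestor covariances -- is parallel to the paper's proof, and the telescoping step itself is valid. The gap is in how you then estimate each summand. You bound $|\mathrm{Cov}(Z_s^{(i)},Z_{s'}^{(i)}\mid\F_{R_T})|$ by the \emph{sum} of two positive terms, $\E[\chi_i^s\chi_i^{s'}\mid\F_{R_T}]+\E[\chi_i^s\mid\F_{R_T}]\E[\chi_i^{s'}\mid\F_{R_T}]$, and claim that the resulting expected count $\E[N_{s,s'}]$ is controlled by the first-moment computation in the proof of Theorem \ref{thm: overlaps}. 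It is not. That Sawyer-formula computation is carried out only for pairs whose paths are confined to the tube between $F_{\beta,t}$ and $F_{\alpha,t}$; the \emph{unconstrained} expected number of pairs (hence of ancestors with descendants in both windows) is not small -- already the expected number of single particles in $I+m(t)$ is $e^t\,\PP(B_t\in I+m(t))\asymp t$, and the pair count is dominated by rare ancestors at atypically high positions at the branching time. Nor can you recover this from the statement of Theorem \ref{thm: overlaps}, which bounds a \emph{probability}: on the bad event the count $N_{s,s'}$ can be as large as $n(R_T)$, with $\E[n(R_T)]=e^{R_T}$, so $\PP(\mathcal B_{s,s',T})\le Ce^{-R_T^\kappa}$ does not yield $\E[N_{s,s'}]\le Ce^{-R_T^\kappa}$. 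This is precisely why the paper never passes to expected counts: it works with $[0,1]$-valued functionals restricted to the good event $\mathcal C^c_{s,s',T}$ (paying only $2\,\PP(\mathcal C_{s,s',T})$ for the complement), uses the exact identity $\Phi_i(s)\Phi_i(s')=1-(1-\Phi_i(s))-(1-\Phi_i(s'))$ valid there, and compares $\prod_i(1-u_i(s)-u_i(s'))$ with $\prod_i(1-u_i(s))(1-u_i(s'))$, whose difference involves only the genuinely small cross terms $u_i(s)u_i(s')$ rather than the two marginal contributions separately.

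The same missing localization undermines your two proposed treatments of the marginal product term. The FKG route is doubtful: $\{\Pi_i$ has a descendant in the bounded window $I+m(t)\}$ is not an increasing event in the particle heights, and even for a half-line an FKG inequality for BBM would need justification; in any case it would only reduce you to $\E[N_{s,s'}]$, which is the quantity that is not small. The fallback via Lemma \ref{lem: fundamental_tail} requires the a priori restriction $R_T^\alpha+o(1)\le y_i(R_T)\le R_T^\beta+o(1)$, which in the paper comes exactly from the localization event $\mathcal A^c_{s,T}$ of Proposition \ref{prop: loc}; without the lower restriction, the annealed quantity $\E\bigl[\sum_i y_i(R_T)^2e^{-2\sqrt2 y_i(R_T)}\bigr]$ is of order $e^{R_T}$ (the integral is dominated by $y\approx-\sqrt2 R_T$, i.e.\ ancestors far above $\sqrt2R_T$), whereas with it one gets the paper's bound $CR_T^{2\beta}e^{-\sqrt2R_T^\alpha}$. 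So the missing idea is Proposition \ref{prop: loc} together with the structural point that it must be exploited through indicators of high-probability events applied to bounded functionals, not through first moments of particle counts.
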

\begin{proof}

For the given $f\in \mathcal C_c(\R)$, take the compact interval $I=\text{supp} f$.
Recall the definitions of $\Sigma_I(s)$ and $Q(v,v')$ in \eqref{eqn: Sigma} and \eqref{eqn: Q}.
Consider the events
\begin{equation}
\label{eqn: events}
\begin{aligned}
\mathcal A_{s,T}&\equiv \left\{\exists v\in \Sigma_I(s):  x_v(R_T)\geq  F_{\alpha,s}(R_T) \text { or } x_v(R_T)\leq F_{\beta,s}(R_T)   \right\}\\
\mathcal B_{s,s',T}&\equiv \left\{\exists v \in \Sigma_I(s), v'\in \Sigma_I(s'): Q(v,v')\in [R_T,s]\right\} \text{ for $s'>s$.}\\
\mathcal C_{s,s',T}&\equiv \mathcal A_{s,T} \cup \mathcal A_{s',T} \cup \mathcal B_{s,s',T}.
\end{aligned}
\end{equation}
By Proposition \ref{prop: loc} and Theorem \ref{thm: overlaps}, one has that for constants $C>0$ and $\kappa>0$
\begin{equation}
\label{eqn: prob decay}
\PP\left( \mathcal C_{s,s',T} \right)\leq C e^{-R_T^\kappa} \text{ for any $|s-s'|\geq R_T$.}
\end{equation}
Enumerate (in no particular order) the particles at time $R_T$ by $i=1,\dots, n(R_T)$ and write $x_i(t)$ for the position of the particular $i$.
Recall the notation introduced at the beginning of the proof of Proposition \ref{prop: LS}.
Define
\beq
\Phi_{i}(s)\equiv\exp \left( -\sum_{v\in \Sigma_I(s): i_v=i}f_\delta(-y_i(R_T) + x^{(i)}_v(s,R_T)-m(s-R_T) )\right).
\eeq
Note that $\Phi_i(s)$ is non-trivial (that is, not equal to one) if and only if the particle $i$ 
has descendants in the interval $I+m(s)$ at time $s$ (and similarly for the time $s'$).
The crucial step is to notice that on $\mathcal C^c_{s,s',T}$ (on $\mathcal B^c_{s,s',T}$ in fact), for any $i=1,\dots, n(R_T)$,
no two extremal particles in $I$ at time $s$ and at time $s'$ have branching time in $[R_T,s]$.
In particular, two such extremal particles must have two distinct ancestors at time $R_T$. 
This implies that
\beq
\forall i=1,\dots,n(R_T) \text{, $\Phi_i(s)$ and $\Phi_i(s)$ cannot be simultaneously non-trivial on $\mathcal C^c_{s,s',T}$}.
\eeq
Therefore, the following identity holds on $\mathcal C^c_{s,s',T}$
\begin{equation}
\label{eqn: prod2}
\begin{aligned}
\Phi_i(s)\Phi_i(s') = 1-\big(1- \Phi_i(s) \big) - \big( 1-\Phi_i(s') \big).
\end{aligned}
\end{equation}
Putting all this together, one gets
\beq
\begin{aligned}
&\left|\E\left[\prod_{i=1}^{n(R_T)}\Phi_i(s)\Phi_i(s') \right] -  \E\left[ \prod_{i=1}^{n(R_T)}\Big( 1-\big(1- \Phi_i(s) \big) - \big( 1-\Phi_i(s') \big)\Big)\right]\right|\leq 2 \PP(\mathcal C_{s,s',T}).
\end{aligned}
\eeq
This has the right decay by \eqref{eqn: prob decay}. 
By the definition of $u_{f,\delta}(t,x)$ in \eqref{eqn: u}
\beq
u_i(s)\equiv\E[1-\Phi_i(s') | \F_{R_T}]=u_f\big(s-R_T, y_i(R_T)+m(s-R_T),
\eeq
and by the conditional independence property of BBM,
\begin{equation}
\label{eqn: comparison}
 \E\left[ \prod_{i=1}^{n(R_T)}\Big( 1-\big(1- \Phi_i(s) \big) - \big( 1-\Phi_i(s') \big)\Big)\right]
 =\E\left[\prod_{i=1}^{n(R_T)} \Big(1- u_i(s)- u_i(s')\Big)\right].
\end{equation}
It remains to compare the right side of \eqref{eqn: comparison} with the contribution to the correlation coming out from the second term of \eqref{eqn: Y}:
\beq
\E\left[ \prod_{i=1}^{n(R_T)} \E[\Phi_i(s) | \F_{R_T}]  \prod_{i=1}^{n(R_T)} \E[\Phi_i(s') | \F_{R_T}]\right]=
\E\left[ \prod_{i=1}^{n(R_T)}( 1 - u_i(s) )(1- u_i(s'))\right].
\eeq
Again, 
\beq
\begin{aligned}
&0 \leq 
\E\left[ \prod_{i=1}^{n(R_T)}( 1 - u_i(s) )(1- u_i(s')) ~;~ \mathcal C_{s,s',T} \right] -  \E\left[\prod_{i=1}^{n(R_T)} \Big(1- u_i(s)- u_i(s')\Big) ~;~ \mathcal C_{s,s',T}\right]\\
& \hspace{12 cm} \leq 2 \PP\left(\mathcal C_{s,s',T} \right),
\end{aligned}
\eeq
so that by \eqref{eqn: prob decay}, it suffices to bound the difference of the two terms on $\mathcal C^c_{s,s',T}$. 
By the definition of this event in  \eqref{eqn: events}, the only $i$'s whose contribution to the product is not one are those
for which $F_{\alpha,s}(R_T) < x_i(R_T)< F_{\beta,s}(R_T)$ and $F_{\alpha,s'}(R_T) < x_i(R_T)< F_{\beta,s'}(R_T)$
Define
\begin{equation}
\label{eqn: Delta}
\begin{aligned}
&\Delta\equiv \{i=1\dots, n(R_T): F_{\alpha,s}(R_T) < x_i(R_T) < F_{\beta,s}(R_T) \\
&\hspace{8cm} \text{ and } F_{\alpha,s'}(R_T) < x_i(R_T) < F_{\beta,s'}(R_T)\}
\end{aligned}
\end{equation}
By the definition of the functions $F$ in \eqref{eqn: F}, the above condition reduces to $R_T^\alpha+o(1)\leq y_i(R_T)\leq R_T^\beta + o(1)$
where $o(1)$ converges to $0$ as $T\to\infty$ uniformly for $s\in[\varepsilon T, T]$. We are left to bound
\beq
\E\left[ \prod_{i\in \Delta}( 1 - u_i(s) )(1- u_i(s')) - \prod_{i\in\Delta} \Big(1- u_i(s)- u_i(s')\Big)~;~ \mathcal C^c_{s,s',T} \right] \\
\eeq
Since both terms are smaller than one, we can use the 
Lipschitz property $|e^z-e^{z'}|\leq |z-z'|$ for $z,z'\leq 0$ to bound the above by
\begin{equation}
\label{eqn: last bound}
\E\left[ \sum_{i\in \Delta} \log\Big(1 - u_i(s) - u_i(s')+u_i(s)u_i(s') \Big) - \log\Big(1 - u_i(s) - u_i(s')\Big) \right] 
\end{equation}

By Lemma \ref{lem: fundamental_tail}, for a fixed $r$, 
\beq
u_i(s)= u_f\big(s-R_T, y_i(R_T)+m(s-R_T) \leq \gamma(r) C_r(f,\delta) y_i(R_T) e^{-\sqrt{2} y_i(R_T)}.
\eeq
In particular, by the restrictions on $y_i(R_T)$ for $i\in\Delta$, it is possible to choose $T$ large enough such that $u_i(s)\leq 1/4$ and $u_i(s')\leq 1/4$ for every $i\in\Delta$. 
Since for $0<b<1$ and $a>1/2$, $\log(a+b)-\log a\leq 2b$, \eqref{eqn: last bound} is bounded by
\beq
\begin{aligned}
2 \E\left[\sum_{i\in \Delta} u_i(s)u_i(s')\right]\leq C \E\left[\sum_{i\in \Delta}  y_i(R_T)^2 e^{-2\sqrt{2} y_i(R_T)}\right],
\end{aligned}
\eeq
for some constant $C>0$.
The  variables $(x_i(R_T),i\leq n(R_T))$ are Gaussian. We take advantage of the linearity and that $\E[n(R_T)]=e^{R_T}$ to get the upper bound
\beq
C e^{R_T}\int_{R_T^\alpha + o(1)}^{R_T^\beta + o(1)} y^2e^{-2\sqrt{2} y} \frac{e^{-\frac{(\sqrt{2}R_T-y)^2}{2R_T}} \dd y}{\sqrt{2\pi R_T}}\leq CR_T^{2\beta} e^{-\sqrt{2}R_T^\alpha}.
\eeq
This concludes the proof of the lemma.
\end{proof}

\appendix 
\section{Convergence of the KPP equation}
Bramson settled the question of the convergence of the solutions of the KPP equation for generic initial conditions in \cite{bramson_monograph}. 
The recentering is a function of the initial conditions. 
We combine here two of the convergence theorems of \cite{bramson_monograph} as well as the condition on $u(0,x)$ to get the specific form \eqref{eqn: recentering} for the recentering.
\begin{thm}[Theorem A, Theorem B, and Example 2 in \cite{bramson_monograph}] 
\label{bramson_fundamental_convergence}
 Let $u$ be a solution of the F-KPP equation \eqref{eqn: kpp} with $0\leq u(0,x) \leq 1$. Then 
\beq \label{kpp_general_form}
u(t, x+m(t)) \to1- w(x), \quad \text{uniformly in} \; x\; \text{as} \; t\to \infty,
\eeq
where $w$ is the unique solution (up to translation) of
\beq
\frac{1}{2} w'' + \sqrt{2} w'+  \sum_{k=1}^\infty p_k w^k -w = 0,
\eeq
if and only if 
\begin{itemize}
\item[1.] for some $h>0$, $\limsup_{t\to \infty} \frac{1}{t} \log \int_t^{t(1+h)} u(0,y) \dd y \leq -\sqrt{2}$;
\item[2.] and for some $\nu>0$, $M>0$, $N>0$, $\int_{x}^{x+N} u(0,y) \dd y > \nu$ for all $x\leq - M$. 
\end{itemize}
Moreover, if $\lim_{x\to \infty} \ee^{b x}u(0,x) = 0$ for some $b> \sqrt{2}$, then one may choose
\beq \label{choice_m} 
m(t) = \sqrt{2} t -\frac{3}{2\sqrt{2}} \log t.
\eeq
\end{thm}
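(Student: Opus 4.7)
The plan is to invoke McKean's representation \eqref{eqn: bbm_repr} to reduce the PDE to a statement about branching Brownian motion. Setting $\phi(y) = 1 - u(0, y)$, we have $u(t,x) = 1 - \E\bigl[\prod_{v \in \Sigma(t)} \phi(x + x_v(t))\bigr]$. Conditions 1 and 2 translate into tail and non-triviality conditions on $\phi$: condition 1 forces the decay of $1-\phi$ at $+\infty$ to be no slower than the Fisher rate $e^{-\sqrt{2}\,\cdot}$, while condition 2 keeps $\phi$ bounded away from $1$ on a set of positive measure near $-\infty$, ensuring that the traveling wave actually develops. The extra decay assumed in the ``moreover'' clause ($u(0,x) = o(e^{-bx})$ for some $b > \sqrt{2}$) places the initial data strictly inside the standard KPP universality class.

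The first main step is to establish tightness of $u(t, \cdot + m(t))$ in the uniform topology and to identify any subsequential limit as the unique (up to translation) monotone solution of the ODE $\tfrac{1}{2} w'' + \sqrt{2} w' + \sum_k p_k w^k - w = 0$. This uses the parabolic maximum principle together with barrier functions: condition 2 provides a lower barrier (a suitably shifted translate of the traveling wave profile), while comparison with the linearized equation $v_t = \tfrac{1}{2} v_{xx} - v$ provides an upper barrier. Linearization is justified because the KPP nonlinearity $(1-u) - \sum p_k (1-u)^k$ equals $-(1-u) + O((1-u)^2)$ for $u$ close to $1$. The linearized equation has explicit heat-kernel solutions, and its behavior after recentering by $\sqrt{2} t$ alone suggests a profile of the form $C \cdot y \cdot e^{-\sqrt{2} y}$ in the relevant tail, which is the signature of the derivative-martingale regime.

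The main obstacle is pinning down the logarithmic correction $-\tfrac{3}{2\sqrt{2}} \log t$ in $m(t)$; linearization alone gives only the leading order $\sqrt{2} t$. Capturing the correction requires a Tauberian and Brownian-bridge analysis showing that the paths which contribute to $\E[\prod \phi(x + x_v(t))]$ are effectively conditioned to stay below a line of slope $\sqrt{2}$, which costs a ballot-type entropic factor of order $t^{-3/2}$; converting this polynomial cost into a shift of the centering produces the constant $\tfrac{3}{2\sqrt{2}}$. This is exactly the type of estimate distilled in Lemma \ref{lem: fundamental_tail} of the present paper and in Proposition 3.3 of \cite{abk_extremal}, both of which descend from Bramson's original monograph computations. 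The extra decay assumption in the ``moreover'' clause fixes the otherwise-free translation in $w$ by ruling out slower-decaying components of the initial data that would otherwise shift $m(t)$ by an additional additive constant, yielding the canonical form \eqref{choice_m}.
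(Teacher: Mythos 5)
This theorem is not proved in the paper at all: it is quoted from Bramson's monograph (Theorems A and B combined with Example 2 of \cite{bramson_monograph}), and the appendix explicitly presents it as an imported result, so there is no internal proof to compare your attempt against. Judged on its own terms, your proposal is a reasonable thumbnail of Bramson's strategy --- McKean's representation, barriers and comparison with the linearized equation, and a Brownian-bridge/ballot-type analysis producing the $t^{-3/2}$ entropic factor that converts into the $\frac{3}{2\sqrt{2}}\log t$ correction --- but it is a roadmap, not a proof, and it has concrete defects beyond incompleteness.

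First, it is circular exactly where the real work lies: the step you identify as ``the main obstacle'' is discharged by appealing to Lemma \ref{lem: fundamental_tail} of the present paper and to Proposition 3.3 of \cite{abk_extremal}, but both of those results are themselves derived from Proposition 8.3 of Bramson's monograph, i.e.\ from the very machinery whose conclusion you are trying to establish. Second, the statement is an equivalence, and you address only sufficiency; the ``only if'' direction (that convergence under some centering forces conditions 1 and 2 on the initial data) is never discussed. A smaller but real point: the ``moreover'' hypothesis does not ``fix the otherwise-free translation in $w$'' --- in the conclusion $w$ remains unique only up to translation. What the decay $u(0,x)=o(e^{-bx})$ for some $b>\sqrt{2}$ actually buys is that Bramson's general centering differs from $\sqrt{2}\,t-\frac{3}{2\sqrt{2}}\log t$ by a term converging to a finite constant, which can then be absorbed into that free translation, so that the centering may be taken in the canonical form \eqref{choice_m}.
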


\end{document}